\DeclareMathAlphabet{\mathpzc}{OT1}{pzc}{m}{it}
\theoremstyle{plain}
\newtheorem{thm}{Theorem}[section] 
\theoremstyle{definition}
\newtheorem{lem}[thm]{Lemma}
\newtheorem{rem}[thm]{Remark}
\newtheorem{cor}[thm]{Corollary}
\def\XXint#1#2#3{{\setbox0=\hbox{$#1{#2#3}{\int}$ }
		\vcenter{\hbox{$#2#3$ }}\kern-.6\wd0}}
\newcommand*{\Scale}[2][4]{\scalebox{#1}{$#2$}}%
\newcounter{MPequ}
\newcommand{\ssubset}{\subset\joinrel\subset}
\newcounter{AppA}
\newcounter{AppB}
\newcounter{AppC}
\newcounter{AppD}
\newcounter{AppE}
\begin{document}\selectlanguage{english}
\begin{center}
\normalsize \textbf{\textsf{Quantitative Gaffney and Korn inequalities}}
\end{center}
\begin{center}
 Wadim Gerner\footnote{\textit{E-mail address:} \href{mailto:wadim.gerner@edu.unige.it}{wadim.gerner@edu.unige.it}}
\end{center}
\begin{center}
{\footnotesize MaLGa Center, Department of Mathematics, Department of Excellence 2023-2027, University of Genoa, Via Dodecaneso 35, 16146 Genova, Italy}
\end{center}
{\small \textbf{Abstract:} 
We prove a homogeneous, quantitative version of Ehrling's inequality for the function spaces $H^1(\Omega)\ssubset L^2(\partial\Omega)$, $H^1(\Omega)\hookrightarrow L^2(\Omega)$ which reflects geometric properties of a given $C^{1,1}$-domain $\Omega\subset\mathbb{R}^n$.

We use this result to derive quantitative homogeneous versions of Gaffney's inequality, of relevance in electromagnetism as well as Korn's inequality, of relevance in elasticity theory.

The main difference to the corresponding classical results is that the constants appearing in our inequalities turn out to be dimensional constants. We provide explicit upper bounds for these constants and show that in the case of the tangential homogeneous Korn inequality our upper bound is asymptotically sharp as $n\rightarrow \infty$.

Lastly, we raise the question of the optimal values of these dimensional constants.
\newline
\newline
{\small \textit{Keywords}: Trace inequalities, Gaffney's inequality, Korn's inequality}
\newline
{\small \textit{2020 MSC}: 35B45, 35Q61, 35Q74, 74B05, 76W05}
\section{Introduction and main results}
\subsection{Introduction}
A variation \cite[Lemma 1.5.3]{S95} of the classical Ehrling inequality \cite{Ehr54},\cite{Fich65} states that if we are given three Banach spaces $U,V,W$, a compact, linear operator $K:U\rightarrow V$ and a linear embedding $E:U\rightarrow W$, then for every $\epsilon>0$ there exists some $C_{\epsilon}>0$ such that
\begin{gather}
	\label{S1E1}
	\|K(u)\|_V\leq \epsilon \|u\|_U+C_{\epsilon}\|E(u)\|_W\text{ for all }u\in U
\end{gather}
where $\|\cdot\|_U,\|\cdot\|_V,\|\cdot\|_W$ denote the norms of $U,V,W$ respectively. This type of inequality can be used to derive a priori estimates in the context of PDEs.

To make the last statement more precise, we let $\Omega\subset\mathbb{R}^n$ be a bounded, smooth domain and consider $U=H^1(\Omega)$, $V=L^2(\partial\Omega)$ and $W=L^2(\Omega)$. It is well known that the trace map $\operatorname{tr}:H^1(\Omega)\rightarrow L^2(\partial\Omega)$ is linear and compact and obviously the inclusion map $\iota:H^1(\Omega)\rightarrow L^2(\Omega)$ is a continuous linear embedding. Applying (\ref{S1E1}) we deduce that for every $\epsilon>0$ there exists some $C_{\epsilon}>0$ such that
\begin{gather}
	\label{S1E2}
	\|u\|_{L^2(\partial\Omega)}\leq \epsilon \|u\|_{H^1(\Omega)}+C_{\epsilon}\|u\|_{L^2(\Omega)}\text{ for all }u\in H^1(\Omega).
\end{gather}
The relevance of (\ref{S1E2}) is that it can be used to control the full gradient of a vector field $B$ (under appropriate boundary conditions) by means of certain linear combinations of its partial derivatives such as $\operatorname{div}(B)\text{, }\operatorname{curl}(B)$ and $\operatorname{Sym}(\nabla B)$. Here $\operatorname{div}(B)=\sum_{i=1}^n(\partial_iB^i)$ is the standard divergence of a vector field, $(\operatorname{curl}(B))_{ij}=\frac{\partial_jB^i-\partial_iB^j}{\sqrt{2}}$, $1\leq i,j\leq n$, is the curl of the vector field and $\operatorname{Sym}(\nabla B)=\frac{DB+(DB)^{\operatorname{Tr}}}{2}$ is the symmetric gradient of the vector field, where $DB$ denotes the Jacobian of $B$ and $M^{\operatorname{Tr}}$ stands for the transpose of a matrix $M$. We remark that the factor $\sqrt{2}$ was chosen to ensure that $\sum_{i,j=1}^n\operatorname{curl}(B)^2_{ij}$ coincides with the Euclidean norm of the standard $3$-d curl vector field for $n=3$.

There are two important inequalities which follow from (\ref{S1E2}), known as Gaffney's inequality, cf. \cite{Gaff51},\cite{Fried55},\cite{MorrEElls56},\cite{Morr56}, \cite[Corollary 2.1.6 \& Theorem 2.1.7]{S95} and as Korn's inequality, cf. \cite{Korn06},\cite[\S 2]{Korn08},\cite{Korn09},\cite{Fried47}, \cite{Nit81},\cite{KondOle88},\cite{KondOle89},\cite{Ryzh99},\cite{DesVill02},\cite{BauPaul16} respectively
\begin{gather}
	\label{S1E3}
	\|B\|^2_{H^1(\Omega)}\leq C_G(\Omega,n)\left(\|\operatorname{curl}(B)\|^2_{L^2(\Omega)}+\|\operatorname{div}(B)\|^2_{L^2(\Omega)}+\|B\|^2_{L^2(\Omega)}\right)
	\\
	\nonumber
	\text{ for all }B\in (H^1(\Omega))^n\text{ with }B\parallel\partial\Omega\text{ or }B\perp \partial\Omega,
	\\
	\label{S1E4}
	\|B\|^2_{H^1(\Omega)}\leq C_K(\Omega,n)\left(\|B\|^2_{L^2(\Omega)}+\|\operatorname{Sym}(\nabla B)\|^2_{L^2(\Omega)}\right)
	\\
	\nonumber
	\text{ for all }B\in (H^1(\Omega))^n\text{ with }B\parallel\partial\Omega\text{ or }B\perp \partial\Omega
\end{gather}
for some suitable constants $C_G,C_K$ which depend on $\Omega$, the dimension of the ambient space but are independent of $B$. We emphasise that we focus in the case of Korn's inequality entirely on tangent or normal boundary conditions, see the cited references for other types of Korn inequalities.

The importance of (\ref{S1E3}) and (\ref{S1E4}) stems from the fact that in applications one frequently has only information about the curl or divergence of a vector field or the symmetric part of the gradient, while information about the full gradient is not available. In these contexts one still would like to understand the behaviour of the full gradient which can also be helpful to establish the existence of solution by means of variational techniques and regularity of solutions to certain PDEs. In the context of magnetostatics $B$ could be the magnetic field. In this case $\operatorname{div}(B)=0$ and $\operatorname{curl}(B)=J$ corresponds to the current density about which information is often available. In electrostatics $B$ may take the role of the electric field which satisfies the equations $\operatorname{curl}(B)=0$ and $\operatorname{div}(B)=\rho$ where $\rho$ is the charge density of the system. On the other hand, in the context of linear elasticity theory, $B$ may play the role of the displacement field. In this case $\operatorname{Sym}(\nabla B)$ corresponds to the (infinitesimal) strain tensor of the system. Then (\ref{S1E3}),(\ref{S1E4}) provide information about an appropriate "average" rate of change of the quantity $B$ of interest.

The constants $C_G(\Omega,n),C_K(\Omega,n)$ tell us essentially how tightly the curl, divergence and symmetric gradient can control the rate of change of the involved vector field. It is therefore of interest to understand how the constants $C_G$ and $C_K$ precisely depend on $\Omega$ and which quantities associated with $\Omega$ can control these constants. We point out that the usual proof of Ehrling's inequality argues by contradiction, see for instance \cite[Lemma 1.5.3]{S95}, so that we have no knowledge about the value of $C_{\epsilon}$ in (\ref{S1E1}) in terms of quantities associated with the operators $K$ and $E$. Consequently a prove of (\ref{S1E3}),(\ref{S1E4}) which utilises Ehrling's lemma does not provide any information about the constants $C_G$ and $C_K$.

We point out that nonetheless the question of the optimal value of $C_G$ has been investigated in \cite{Sil18}. One key observation is that if $\Omega$ is convex, then the shape operator is negative-semi definite which allows one to bypass the utilisation of Ehrling's lemma in the case where $B\parallel \partial\Omega$, cf. \cite[Theorem 2.1.5]{S95}. Similarly, if $\Omega$ is mean convex, i.e. the mean curvature of $\partial\Omega$ is non-positive, and $B\perp\partial\Omega$ one can similarly avoid utilising Ehrling's inequality because the term involving the boundary values of $B$ will be non-positive as well and can be bounded above by $0$. From this one easily obtains the fact that
\begin{gather}
	\label{S1E5}
	\|\nabla B\|^2_{L^2(\Omega)}\leq \|\operatorname{curl}(B)\|^2_{L^2(\Omega)}+\|\operatorname{div}(B)\|^2_{L^2(\Omega)}\text{ for }B\in (H^1(\Omega))^n\text{, }B\parallel \partial\Omega\text{, }\Omega\text{ is convex}
	\\
	\label{S1E6}
	\|\nabla B\|^2_{L^2(\Omega)}\leq \|\operatorname{curl}(B)\|^2_{L^2(\Omega)}+\|\operatorname{div}(B)\|^2_{L^2(\Omega)}\text{, }B\in (H^1(\Omega))^n\text{, }B\perp \partial\Omega\text{, }\Omega\text{ is mean-convex}
\end{gather}
It is shown in \cite[Theorem 8]{Sil18} that $C_G(\Omega,n)\geq 1$ and that equality holds if and only if $\Omega$ is convex (tangent to the boundary condition) and if and only if $\Omega$ is mean-convex (normal to the boundary condition) respectively. Further it is shown in \cite[Theorem 8]{Sil18} that if we let $\lambda\Omega:=\{\lambda \cdot x\mid x\in \Omega\}$ for given $\lambda>0$, then $C_G(\lambda \Omega)=C_G(\Omega)$ if and only if $C_G(\Omega)=1$. Therefore, if $\Omega$ is not mean-convex, then $C_G(\Omega)$ will depend in a non-trivial way on $\Omega$ and in particular, in stark contrast to the convex and mean-convex situation, not be an absolute constant. 

The results in \cite{Sil18} do not provide any upper bounds in terms of geometric properties of $\Omega$ for $C_G(\Omega)$ beyond the convex and mean-convex case. But by working with a partition of unity as in \cite[Theorem 3.1 \& Theorem 3.2]{Mon15}, see also \cite[Theorem 3.4]{SW21}, one can derive a quantitative version of (\ref{S1E2}) where the constant $C_{\epsilon}$ will depend also on the Lipschitz character of the domain $\Omega$ and on a chosen partition of unity. This can in turn be used to derive upper bounds on $C_G(\Omega)$ and $C_K(\Omega)$ for bounded $C^{1,1}$-domains $\Omega$ in terms of the Lipschitz character of $\partial\Omega$, the principle curvatures of $\partial\Omega$ and a choice of partition of unity. In the context of $C^{1,1}$-domains it is however more natural to work with tubular neighbourhoods which provide a "global coordinate system" around the boundary of the domain. This allows one on the one hand to avoid using a partition of unity and at the same time we are able to obtain a quantitative bound for $C_{\epsilon}$ in (\ref{S1E2}) which depends (apart from $\epsilon$) solely on the reach of $\partial\Omega$, a quantity with a clear geometric interpretation which also provides a bound on the principal curvatures of $\partial\Omega$, cf. \cite[Remark 2.8]{Dal18},\cite[Proof of Lemma 14.16]{GT01}. See also \Cref{S22} for more details about the reach of a domain and note also that the Lipschitz character alone is not enough to control principle curvatures.

This allows one to obtain bounds on the constants $C_G(\Omega,n)$ and $C_K(\Omega,n)$ in (\ref{S1E3}),(\ref{S1E4}) which depend on the reach of the domain. But these kind of estimates do not allow us to infer precise values of the optimal constants $C_G$ and $C_K$. We therefore suggest here a shift in perspective which will enable us to formulate related inequalities while at the same time removing the geometric dependence of the corresponding constant $C^*_G(\Omega,n)$, $C^*_K(\Omega,n)$ appearing in these new inequalities, i.e. we will find $C^*_G(\Omega,n)=C^*_G(n)$ and $C^*_K(\Omega,n)=C^*_K(n)$. The task then becomes to determine the best possible value of an absolute constant which appears to be more feasible.

To make this more precise, we start by letting $\operatorname{Sub}_c(\mathbb{R}^n)$ be the collection of all bounded $C^{1,1}$-domains in $\mathbb{R}^n$. We observe that if $\Omega\in \operatorname{Sub}_c(\mathbb{R}^n)$ and $\lambda>0$, we may consider the domains $\lambda\Omega$ and clearly $B(x)\mapsto B\left(\frac{x}{\lambda}\right)\equiv B_{\lambda}(x)$ maps the tangent (resp. normal) $H^1(\Omega)$-vector fields onto the tangent (resp. normal) $H^1(\lambda \Omega)$ vector fields. A simple change of variables then yields
\begin{gather}
	\nonumber
	\Scale[0.96]{\|\nabla B_{\lambda}\|^2_{L^2(\lambda \Omega)}=\lambda^{n-2}\|\nabla B\|^2_{L^2(\Omega)}\text{, }\|\operatorname{Sym}(\nabla B_{\lambda})\|^2_{L^2(\lambda\Omega)}=\lambda^{n-2}\|\operatorname{Sym}\nabla B\|^2_{L^2(\Omega)}\text{, }\|B_{\lambda}\|^2_{L^2(\lambda\Omega)}=\lambda^n\|B\|^2_{L^2(\Omega)}.}
\end{gather}
The idea now is to modify (\ref{S1E4}) in order to take into account the distinct scaling behaviour and in doing so making the constant $C_K(\Omega,n)$ scaling independent. To this end let $L:\operatorname{Sub}_c(\mathbb{R}^n)\rightarrow (0,\infty)$ be a $1$-homogeneous functional, i.e. $L(\lambda \Omega)=\lambda L(\Omega)$ for all $\lambda>0$ and all $\Omega\in \operatorname{Sub}_c(\mathbb{R}^n)$. Then replacing $\|B\|^2_{L^2(\Omega)}$ in (\ref{S1E4}) by $\frac{\|B\|^2_{L^2(\Omega)}}{L^2(\Omega)}$ will ensure that all quantities involved have the same scaling behaviour. It then follows from (\ref{S1E4}) that for any such $1$-homogeneous function $L$ we have an inequality of the form
\begin{gather}
	\label{S1E7}
	\frac{\|B\|^2_{L^2(\Omega)}}{L^2(\Omega)}+\|\nabla B\|^2_{L^2(\Omega)}\leq C^L_K(\Omega,n)\left(\frac{\|B\|^2_{L^2(\Omega)}}{L^2(\Omega)}+\|\operatorname{Sym}(\nabla B)\|^2_{L^2(\Omega)}\right)\text{ provided }B\parallel \partial\Omega\text{ or }B\perp \partial\Omega
\end{gather}
where the constant $C^L_K(\Omega,n)$ will depend on $L$. If we let $C^L_K(\Omega,n)$ be the best possible constant in the inequality (\ref{S1E7}) then by our choice we clearly have $C^L_K(\lambda\Omega,n)=C^L_K(\Omega,n)$ for all $\lambda>0$ and all $\Omega\in \operatorname{Sub}_c(\mathbb{R}^n)$. This does not show that $C^L_K(\Omega,n)$ will be independent of $\Omega$ as it could still depend on quantities like $\frac{|\Omega|^{\frac{1}{n}}}{\operatorname{diam}(\Omega)}$ or similar. The hope however is that for the right choice of $L$ it may turn out that $C^L_K(\Omega,n)$ becomes indeed independent of $\Omega$ which would allow us to collect the geometric dependence of $C^L_K(\Omega,n)$ in the prefactor $L(\Omega)$ while turning $C^L_K(\Omega,n)$ into a dimension dependent constant.

Identifying such an $L$ is in itself a non-trivial task and the right choice of $L$ will depend on the specific inequality under consideration. As we shall see, if we let $\rho(\partial\Omega)\equiv \rho$ denote the reach of the boundary of $\Omega$ (which is a well-defined quantity whenever $\partial\Omega\in C^{1,1}$) then we are able to turn $C^*_K(\Omega,n):=C^{\rho}_K(\Omega,n)$ into an absolute dimensional constant. The same turns out to be true for (\ref{S1E3}). We hence will be able to show the following
\begin{gather}
	\label{S1E8}
	\frac{\|B\|^2_{L^2(\Omega)}}{\rho^2(\partial\Omega)}+\|\nabla B\|^2_{L^2(\Omega)}\leq C^*_G(n)\left(\frac{\|B\|^2_{L^2(\Omega)}}{\rho^2(\partial\Omega)}+\|\operatorname{curl}(B)\|^2_{L^2(\Omega)}+\|\operatorname{div}(B)\|^2_{L^2(\Omega)}\right)
	\\
	\nonumber
	\text{for all }\Omega\in \operatorname{Sub}_c(\mathbb{R}^n)\text{ and all }B\in \left(H^1(\Omega)\right)^n\text{ with }B\parallel \partial\Omega\text{ or }B\perp \partial\Omega,
	\\
	\label{S1E9}
	\frac{\|B\|^2_{L^2(\Omega)}}{\rho^2(\partial\Omega)}+\|\nabla B\|^2_{L^2(\Omega)}\leq C^*_K(n)\left(\frac{\|B\|^2_{L^2(\Omega)}}{\rho^2(\partial\Omega)}+\|\operatorname{Sym}(\nabla B)\|^2_{L^2(\Omega)}\right)
\\
\nonumber
\text{for all }\Omega\in \operatorname{Sub}_c(\mathbb{R}^n)\text{ and all }B\in \left(H^1(\Omega)\right)^n\text{ with }B\parallel \partial\Omega\text{ or }B\perp \partial\Omega.
\end{gather}
Our approach is constructive in the sense that we are able to provide explicit upper bounds on the constants $C^*_G(n)$ and $C^*_K(n)$. Moreover we point out that the optimal value of the constants $C^*_G(n)$ and $C^*_K(n)$ may depend on the prescribed boundary conditions (tangent or normal). We finally observe that one can easily obtain the following relationship between the optimal constants $C_G(\Omega,n)$,$C_K(\Omega,n)$ and $C^*_G(n)$,$C^*_K(n)$
\begin{gather}
	\label{S1E10}
	C_G(\Omega,n)\leq \max\left\{\rho^2(\partial\Omega),\rho^{-2}(\partial\Omega)\right\}C^*_G(n)
	\\
	\label{S1E11}
	C_K(\Omega,n)\leq \max\left\{\rho^2(\partial\Omega),\rho^{-2}(\partial\Omega)\right\}C^*_K(n).
\end{gather}
These inequalities in particular imply that $C_G(\Omega,n)\leq C^*_G(n)$ if $\Omega$ is of unit reach, i.e. $\rho(\partial\Omega)=1$. Similarly $C_K(\Omega,n)\leq C^*_K(n)$ if $\Omega$ is of unit reach. Therefore the constants $C^*_G(n)$ and $C^*_K(n)$ provide upper bounds on the optimal constants in (\ref{S1E3}) and (\ref{S1E4}) among domains of unit reach respectively.
\section{Preliminaries}
\subsection{Notation}
\label{S21}
Throughout $n$ will always denote a positive integer which is greater or equal to $2$. Given a bounded $C^{1,1}$-domain $\Omega\subset\mathbb{R}^n$ we denote by $\kappa_i$, $1\leq i\leq n-1$, $s$ and $H$ the principal curvatures, the shape operator and the mean curvature of $\partial\Omega$ with respect to the outward unit normal and we note that $\kappa_i\in L^{\infty}(\partial\Omega)$. Here we use the convention $H=\frac{\sum_{i=1}^{n-1}\kappa_i}{n-1}$. We denote by $\rho(\partial\Omega)$ the reach of $\partial\Omega$, see also \Cref{S22} for more details about the reach of the boundary of a domain. We denote by $H^1(\Omega)$ the space of scalar $H^1$-functions on $\Omega$ and by $\left(H^1(\Omega)\right)^n$ the space of $H^1$-vector fields on $\Omega$. Due to the trace inequality each $B\in \left(H^1(\Omega)\right)^n$ has a well defined trace $B|_{\partial\Omega}\in \left(L^2(\partial\Omega)\right)^n$. We say that $B\in \left(H^1(\Omega)\right)^n$ is parallel to $\partial\Omega$ and write $B\parallel \partial\Omega$ if $\mathcal{N}(x)\cdot B|_{\partial\Omega}(x)=0$ for $\mathcal{H}^{n-1}$-a.e. $x\in \partial\Omega$ where $\mathcal{N}$ denotes the outward pointing unit normal on $\mathcal{N}$ and $\mathcal{H}^{n-1}$ denotes the $(n-1)$-dimensional Hausdorff measure. We say that $B\in \left(H^1(\Omega)\right)^n$ is normal to $\partial\Omega$ and write $B\perp \partial\Omega$ if $B|_{\partial\Omega}=(B|_{\partial\Omega}\cdot \mathcal{N})\mathcal{N}$. We let $e_i$, $1\leq i\leq n$ denote the standard basis vectors of $\mathbb{R}^n$. We decompose $B\in \left(H^1(\Omega)\right)^n$ as $B=\sum_{i=1}^nB^ie_i$ and define $\|B\|^2_{L^2(\Omega)}:=\sum_{i=1}^n\|B^i\|^2_{L^2(\Omega)}$, $\|\nabla B\|^2_{L^2(\Omega)}:=\sum_{i,j=1}^n\|\partial_jB^i\|^2_{L^2(\Omega)}$, $\|B\|^2_{H^1(\Omega)}:=\|B\|^2_{L^2(\Omega)}+\|\nabla B\|^2_{L^2(\Omega)}$. If $A\in \left(H^1(\Omega)\right)^{n\times n}$ is a square matrix with $H^1(\Omega)$ entries $A_{ij}$, then we set $\|A\|^2_{L^2(\Omega)}:=\sum_{i,j=1}^n\|A_{ij}\|^2_{L^2(\Omega)}$. Lastly, we define for given $B\in \left(H^1(\Omega)\right)^n$, $(\nabla B)_{ij}:=(\partial_jB^i)$, $(\operatorname{Sym}(\nabla B))_{ij}:=\frac{(\partial_jB^i)+(\partial_iB^j)}{2}$ and $(\operatorname{curl}(B))_{ij}:=\frac{(\partial_jB^i)-(\partial_iB^j)}{\sqrt{2}}$. We note that with our definition $\operatorname{curl}(B)$ does not coincide with the anti-symmetric part of $\nabla B$. The reason for this choice is that with our definition the $L^2$-norm of $\operatorname{curl}(B)$ for $n=3$ coincides with the $L^2$-norm of the standard vector curl of $B$ which appears in the standard formulation of Maxwell's equations.
\subsection{The reach of a domain}
\label{S22}
The notion of the reach of a set dates back to the work of Federer \cite[Section 4]{Fed59}. We consider here only the situation where $\Omega \subset \mathbb{R}^n$ is a bounded $C^{1,1}$-domain. We define the distance function $\delta_{\partial\Omega}(x):=\operatorname{dist}(x,\partial\Omega)=\inf_{y\in \partial\Omega}|x-y|$. The set of points which admit a unique projection onto $\partial\Omega$ is defined by $\operatorname{Unp}(\partial\Omega):=\{x\in \mathbb{R}^n\mid \exists! y\in \partial\Omega:\delta_{\partial\Omega}(x)=|x-y|\}$. The reach of $\partial\Omega$ at a point $y\in \partial\Omega$ is defined by $\operatorname{reach}(\partial\Omega,y):=\sup\{0<r\mid B_r(y)\subset \operatorname{Unp}(\partial\Omega)\}$ and finally the reach of $\partial\Omega$ is defined by $\rho(\partial\Omega)\equiv \operatorname{reach}(\partial\Omega):=\inf_{y\in \partial\Omega}\operatorname{reach}(\partial\Omega,y)$. Geometrically, the reach of $\partial\Omega$ is the largest number such that every ball of a radius smaller than $\operatorname{reach}(\partial\Omega)$ centred around any point $y\in \partial\Omega$ admits a unique projection onto $\partial\Omega$. We denote the corresponding projection map as follows $P_{\partial\Omega}:\operatorname{Unp}(\partial\Omega)\rightarrow\partial\Omega$, $x\mapsto y$ where $y\in \partial\Omega$ is the unique point which realises the distance to $\partial\Omega$.
\newline
\newline
A strongly related notion is the notion of a uniform ball domain. We say that $\Omega$ satisfies the $\epsilon$-uniform ball condition for some $0<\epsilon$ if for every $y\in \partial\Omega$, there exists some $d_y\in S^{n-1}$ with $B_{\epsilon}(y-\epsilon d_y)\subset\Omega$ and $B_{\epsilon}(y+\epsilon d_y)\subset \mathbb{R}^n\setminus \overline{\Omega}$ where $S^m$ denotes the $m$-dimensional unit sphere and $B_r(x)$ denotes the open ball of radius $r$ centred at $x$. We have the following relationship between the notion of the reach and the notion of a uniform ball domain
\begin{lem}[{\cite[Theorem 2.5 \& Theorem 2.6]{Dal18}}]
	\label{S2L1}
	Let $\Omega\subset\mathbb{R}^n$ be a bounded $C^{1,1}$-domain, then the following holds
	\begin{enumerate}
		\item $\rho(\partial\Omega)=\sup\{\epsilon>0\mid \Omega\text{ satisfies the }\epsilon\text{-uniform ball condition}\}$,
		\item $0<\operatorname{\rho}(\partial\Omega)<\infty$,
		\item $d_y$ in the definition of the $\epsilon$-ball condition can be taken to be $d_y=\mathcal{N}(y)$, where $\mathcal{N}(y)$ denotes the outward unit normal.
	\end{enumerate}
\end{lem}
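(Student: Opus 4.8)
The plan is to prove the three assertions in the order (iii), (i), (ii), since (iii) enters the proof of (i) and (i) enters the proof of (ii). For part (iii), suppose $\Omega$ satisfies the $\epsilon$-uniform ball condition and let $d_y\in S^{n-1}$ be an admissible direction at $y\in\partial\Omega$. The open ball $B_\epsilon(y-\epsilon d_y)\subset\Omega$ has $y$ on its boundary, and an open ball contained in $\Omega$ cannot contain any point of $\partial\Omega$; hence, writing $\partial\Omega$ near $y$ as a $C^1$-graph over $T_y\partial\Omega$ with height function $\phi(v)=o(|v|)$, a first-order expansion of $|(y-\epsilon d_y)-(y+v+\phi(v)\mathcal{N}(y))|^2$ in a tangent direction $v$ forces $d_y\cdot v=0$ for every $v\in T_y\partial\Omega$, so $d_y=\pm\mathcal{N}(y)$. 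Since $y+s\mathcal{N}(y)\notin\overline\Omega$ for small $s>0$ while the interior ball contains a short piece of the normal segment through $y$, comparing which side the centre lies on rules out the minus sign, so $d_y=\mathcal{N}(y)$; the exterior ball condition is consistent with this choice.

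For part (i) put $\rho:=\rho(\partial\Omega)$ and $\sigma:=\sup\{\epsilon>0\mid\Omega\text{ satisfies the }\epsilon\text{-uniform ball condition}\}$. To get $\rho\ge\sigma$ it suffices to show that the $\epsilon$-ball condition implies $\operatorname{reach}(\partial\Omega,y)\ge\epsilon$ for every $y\in\partial\Omega$. Fix $x$ with $0<\delta:=\delta_{\partial\Omega}(x)<\epsilon$ and suppose, towards a contradiction, that it has two distinct nearest points $y_1\ne y_2$; without loss of generality $x\in\Omega$ (the exterior case is symmetric, with exterior balls replacing interior ones). By connectedness $B_\delta(x)\subset\Omega$, so each $B_\delta(x)$ is an open ball in $\Omega$ with $y_i$ on its boundary, which as in the proof of (iii) forces $x=y_i-\delta\mathcal{N}(y_i)$, i.e. $y_i=x+\delta\mathcal{N}(y_i)$. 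By (iii) the interior ball $B_\epsilon(y_i-\epsilon\mathcal{N}(y_i))\subset\Omega$ contains no boundary point, so $|y_j-(y_i-\epsilon\mathcal{N}(y_i))|\ge\epsilon$ for $j\ne i$; inserting $y_i=x+\delta\mathcal{N}(y_i)$ and expanding the square yields $(\delta-\epsilon)\bigl(1-\mathcal{N}(y_i)\cdot\mathcal{N}(y_j)\bigr)\ge0$, and since $\delta<\epsilon$ and $\mathcal{N}(y_i)\cdot\mathcal{N}(y_j)\le1$ this forces $\mathcal{N}(y_i)=\mathcal{N}(y_j)$, hence $y_i=y_j$, a contradiction. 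Thus $\{\delta_{\partial\Omega}<\epsilon\}\subset\operatorname{Unp}(\partial\Omega)$, whence $B_\epsilon(y)\subset\operatorname{Unp}(\partial\Omega)$ for all $y$, giving $\rho\ge\epsilon$ and, taking the supremum, $\rho\ge\sigma$.

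For the reverse inequality $\rho\le\sigma$ I would fix $\epsilon<\rho$ and verify the $\epsilon$-ball condition. The decisive sub-claim is that the inward normal segment realises the distance: $\delta_{\partial\Omega}(y-t\mathcal{N}(y))=t$ with unique nearest point $y$, for all $0\le t<\rho$. Granting it, $c:=y-\epsilon\mathcal{N}(y)$ satisfies $\delta_{\partial\Omega}(c)=\epsilon$, so the open ball $B_\epsilon(c)$ meets no boundary point; it is connected and contains $y-\tfrac{\epsilon}{2}\mathcal{N}(y)\in\Omega$ (the normal segment stays in $\Omega$ by the sub-claim), hence $B_\epsilon(c)\subset\Omega$, and symmetrically $B_\epsilon(y+\epsilon\mathcal{N}(y))\subset\mathbb{R}^n\setminus\overline\Omega$. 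To prove the sub-claim I would run a continuation argument in $t$: it holds for small $t>0$ by $C^1$-regularity; the set of admissible $t$ is closed because $\delta_{\partial\Omega}$ is continuous and $\{\delta_{\partial\Omega}<\rho\}\subset\operatorname{Unp}(\partial\Omega)$ (any point at distance $<\rho$ lies in $B_r(z)\subset\operatorname{Unp}(\partial\Omega)$ for a nearest point $z$ and suitable $r<\rho$, so nearest points stay unique); and openness follows from a second-order expansion of $\partial\Omega$ at $y$, using that $\operatorname{reach}(\partial\Omega)\ge\rho$ bounds the second fundamental form by $1/\rho$ (cf. \cite[Remark 2.8]{Dal18}): were the nearest point $q(s)$ of $y-s\mathcal{N}(y)$ ever different from $y$, then since $q(s)\to y$ as $s$ decreases to the critical value, writing $q(s)=y+w(s)$ and using $|w(s)\cdot\mathcal{N}(y)|\le\tfrac{1}{2\rho}|w(s)|^2$ one is forced into $s>\rho$, contradicting $s<\rho$. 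Controlling this competitor foot-point through the $C^{1,1}$-character encoded in the reach is the step I expect to be the main obstacle; alternatively one appeals to Federer's structure theory of sets of positive reach, which is what \cite[Theorem 2.5 \& Theorem 2.6]{Dal18} ultimately relies on.

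For part (ii), positivity follows by combining compactness with the $C^{1,1}$-character: covering $\partial\Omega$ by finitely many graph charts yields uniform constants $r_0,L_0>0$ so that near each $y\in\partial\Omega$, in coordinates adapted to $\mathcal{N}(y)$, the boundary is the graph of $f_y$ with $f_y(0)=0$, $\nabla f_y(0)=0$ and $|f_y(v)|\le\tfrac{L_0}{2}|v|^2$; a direct sphere comparison then shows that the $\epsilon_0$-ball condition holds with $\epsilon_0:=\tfrac12\min\{r_0,1/L_0\}>0$, so $\rho\ge\epsilon_0>0$ by part (i). Finiteness is immediate from part (i): if $\rho=\infty$, then (a smaller admissible ball being contained in a larger one) the $\epsilon$-ball condition holds for every $\epsilon>0$, so for fixed $y\in\partial\Omega$ the union $\bigcup_{\epsilon>0}B_\epsilon(y-\epsilon\mathcal{N}(y))$ equals the open half-space $\{z\mid(z-y)\cdot\mathcal{N}(y)<0\}$ and is contained in $\Omega$, contradicting the boundedness of $\Omega$.
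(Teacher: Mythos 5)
The paper does not prove this lemma --- it cites \cite[Theorems 2.5 and 2.6]{Dal18} and moves on --- so there is no in-paper argument to compare against; I assess your attempt on its own merits. Your treatment of (iii), of the direction $\rho\ge\sigma$ in (i), and of (ii) is correct and essentially routine: the first-order graph expansion pinning $d_y$ to $\pm\mathcal{N}(y)$ with the interior ball fixing the sign; the two-foot-point contradiction via $y_i=x+\delta\mathcal{N}(y_i)$ and expanding $|y_j-(y_i-\epsilon\mathcal{N}(y_i))|^2\ge\epsilon^2$ (yielding $\{\delta_{\partial\Omega}<\epsilon\}\subset\operatorname{Unp}(\partial\Omega)$); and the uniform-graph-chart sphere comparison plus half-space exhaustion for (ii). There is no circularity here, since (ii) only uses (iii) and the $\rho\ge\sigma$ half of (i), but you should make the order explicit --- (iii), then $\rho\ge\sigma$, then positivity of $\rho$, then $\rho\le\sigma$, then finiteness --- because the last two steps implicitly use $0<\rho<\infty$.

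The genuine gap is in the direction $\rho\le\sigma$, which is where all the content of the cited theorems lives. You reduce it correctly to the sub-claim $\delta_{\partial\Omega}(y-t\mathcal{N}(y))=t$ for $0\le t<\rho$, and your continuation set-up is right: closedness follows from continuity of $\delta_{\partial\Omega}$ together with $\{\delta_{\partial\Omega}<\rho\}\subset\operatorname{Unp}(\partial\Omega)$. For openness you rely on the two-sided parabolic estimate $|w\cdot\mathcal{N}(y)|\le\tfrac{1}{2\rho}|w|^2$ for all $y+w\in\partial\Omega$; granted that, the algebra does force $s>\rho$ and the contradiction closes. But that estimate does \emph{not} follow by naive integration from the pointwise curvature bound $\|\kappa_i\|_{L^\infty}\le 1/\rho$: the shape operator at a chart point $v\ne 0$ involves a metric factor $(1+|\nabla\phi|^2)^{-3/2}$ and the normal at $\mu(v)$, not at $y$, so the essential Hessian of the height function $\phi$ is not pointwise bounded by $1/\rho$ away from the base point, and a further argument is required to turn the curvature bound into the global graph estimate. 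That estimate is precisely Federer's structural inequality for sets of positive reach, which is not elementary and which you yourself flag as the fall-back. So the hard half of (i) is not actually proved here; it is incomplete exactly where \cite{Dal18} brings in Federer's theory. The remainder of the proposal is sound.
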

We point out that another characterisation of the reach in terms of properties of the signed distance function can also be obtained, cf. \cite[Theorem 2.7]{Dal18}. For our purposes the precise relationship is not of relevance, but we will make use of the signed distance function which is defined as follows
\begin{gather}
	\nonumber
	b_{\Omega}(z):=\begin{cases}
		\operatorname{dist}(z,\partial\Omega) & \text{ if }z\in \mathbb{R}^n\setminus \overline{\Omega} \\
		0 &\text{ if }z\in \partial\Omega \\
		-\operatorname{dist}(z,\partial\Omega) &\text{ if }z\in \Omega
	\end{cases}
\end{gather}
For our purposes we will need the following properties of the reach whose proof we provide for completeness
\begin{lem}[Reach and tubular neighbourhoods]
	\label{S2L2}
	Let $\Omega\subset\mathbb{R}^n$ be a bounded $C^{1,1}$-domain. Define $I:=(-\rho(\partial\Omega),\rho(\partial\Omega))$ and the map $\Psi:I\times \partial\Omega\rightarrow \mathbb{R}^n\text{, }(t,y)\mapsto y+t\mathcal{N}(y)$. Then the following holds
	\begin{enumerate}
		\item $\Psi$ defines a bi-Lipschitz homeomorphism onto its image,
		\item $\Psi\left(I\times \partial\Omega\right)=\{x\in \mathbb{R}^n\mid \delta_{\partial\Omega}(x)<\rho(\partial\Omega)\}$,
		\item If $\partial\Omega\in C^k$ for some $k\geq 2$, then $\Psi$ is a $C^{k-1}$-diffeomorphism.
	\end{enumerate}
\end{lem}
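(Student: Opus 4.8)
\emph{Proof strategy.} The plan is to prove (ii) first, then to deduce (i) from it together with the uniform ball condition of \Cref{S2L1}, and finally to obtain (iii) via the inverse function theorem. For (ii), the inclusion $\Psi(I\times\partial\Omega)\subseteq\{\delta_{\partial\Omega}<\rho\}$ is immediate, since $|\Psi(t,y)-y|=|t|<\rho$ and $y\in\partial\Omega$. Conversely, given $x$ with $\delta_{\partial\Omega}(x)<\rho$, pick $y\in\partial\Omega$ realising the distance (possible by compactness of $\partial\Omega$); since $|x-y|=\delta_{\partial\Omega}(x)<\rho\le\operatorname{reach}(\partial\Omega,y)$ we get $x\in\operatorname{Unp}(\partial\Omega)$ with $P_{\partial\Omega}(x)=y$. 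First-order optimality of $y$ as a minimiser of $z\mapsto|x-z|^2$ on the $C^1$-hypersurface $\partial\Omega$ forces $x-y\perp T_y\partial\Omega=\mathcal{N}(y)^{\perp}$, hence $x-y=t\,\mathcal{N}(y)$ with $|t|=\delta_{\partial\Omega}(x)<\rho$, i.e. $x=\Psi(t,y)$.

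The core of (i) is the identity $\delta_{\partial\Omega}(\Psi(t,y))=|t|$ together with $b_\Omega(\Psi(t,y))=t$ and $P_{\partial\Omega}(\Psi(t,y))=y$. To obtain it I would use \Cref{S2L1}(i),(iii): for every $\epsilon\in(0,\rho)$ and every $y\in\partial\Omega$ one has $B_\epsilon(y-\epsilon\mathcal{N}(y))\subseteq\Omega$ and $B_\epsilon(y+\epsilon\mathcal{N}(y))\subseteq\mathbb{R}^n\setminus\overline{\Omega}$. Fixing $(t,y)$ and choosing $\epsilon\in(|t|,\rho)$, an elementary distance computation shows that $\Psi(t,y)$ lies in the interior ball if $t<0$ and in the exterior ball if $t>0$; writing $c$ for the centre of the relevant ball, one has $|\Psi(t,y)-c|=\epsilon-|t|$ while every $z\in\partial\Omega$ satisfies $|z-c|\ge\epsilon$, so $|\Psi(t,y)-z|\ge|t|=|\Psi(t,y)-y|$, giving $\delta_{\partial\Omega}(\Psi(t,y))=|t|$ attained uniquely at $y$ (the case $t=0$ being trivial), and the membership in $\Omega$ resp. $\mathbb{R}^n\setminus\overline{\Omega}$ fixes the sign of $b_\Omega$. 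Injectivity of $\Psi$ is then immediate, since $|t|$ and $y$ — hence $t$ — are recovered from $\Psi(t,y)$, and $\Psi^{-1}$ is the map $x\mapsto(b_\Omega(x),P_{\partial\Omega}(x))$; as $b_\Omega$ is $1$-Lipschitz on $\mathbb{R}^n$ and $P_{\partial\Omega}$ is continuous on $\operatorname{Unp}(\partial\Omega)\supseteq\{\delta_{\partial\Omega}<\rho\}$, this makes $\Psi$ a homeomorphism onto $\{\delta_{\partial\Omega}<\rho\}$ (reproving (ii)). Finally $\Psi$ is globally Lipschitz because $|\Psi(t_1,y_1)-\Psi(t_2,y_2)|\le(1+\rho\operatorname{Lip}(\mathcal{N}))|y_1-y_2|+|t_1-t_2|$ with $\operatorname{Lip}(\mathcal{N})<\infty$ since $\partial\Omega\in C^{1,1}$, while on each sub-tube $\{\delta_{\partial\Omega}\le r\}$ with $r<\rho$ the projection $P_{\partial\Omega}$ is Lipschitz (standard reach estimate, with constant $\rho/(\rho-r)$), so $\Psi|_{[-r,r]\times\partial\Omega}$ is bi-Lipschitz onto $\{\delta_{\partial\Omega}\le r\}$.

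For (iii), assume $\partial\Omega\in C^k$ with $k\ge 2$; then $\mathcal{N}\in C^{k-1}(\partial\Omega)$, so $\Psi\in C^{k-1}$, and since $\Psi$ is already a homeomorphism onto the open set $\{\delta_{\partial\Omega}<\rho\}$ it suffices to check that $D\Psi$ is everywhere invertible and then to invoke the inverse function theorem. Working in a local chart for $\partial\Omega$, $D\Psi$ maps the $t$-direction to $\mathcal{N}(y)\perp T_y\partial\Omega$ and the tangential directions to $(\operatorname{Id}+t\,d\mathcal{N}_y)$ applied to $T_y\partial\Omega$, so invertibility of $D\Psi$ reduces to invertibility of $\operatorname{Id}+t\,d\mathcal{N}_y$ on $T_y\partial\Omega$; the eigenvalues of $d\mathcal{N}_y$ are, up to sign, the principal curvatures $\kappa_i(y)$, which satisfy $|\kappa_i|\le 1/\rho$, and $|t|<\rho$ yields $|t\,\kappa_i(y)|<1$, hence invertibility. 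I expect the main obstacle to be the distance identity of the middle paragraph — injectivity, the explicit inverse, and the homeomorphism property all rest on it — and it is precisely there that \Cref{S2L1} is indispensable, replacing the usual compactness/contradiction reasoning by an explicit geometric argument.
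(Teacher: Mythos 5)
Your proof is correct, and while the central geometric idea coincides with the paper's, two of the three parts are handled by genuinely different means. The shared core is the identity $\delta_{\partial\Omega}(\Psi(t,y))=|t|$ with unique foot point $y$, obtained in both cases from the $\epsilon$-uniform ball condition of \Cref{S2L1}: the paper merely writes ``it follows from (i) and (ii) of \Cref{S2L1} that $|y+t\mathcal{N}(y)-z|\geq |t|$'', whereas you unpack this into the explicit interior/exterior tangent-ball computation, which is the right way to see it. For the surjectivity in (ii), the paper quotes the identity $w=P_{\partial\Omega}(w)+b_{\Omega}(w)\mathcal{N}(P_{\partial\Omega}(w))$ from \cite[Eq. (A.6)]{Dal18}; you instead derive the fact that $x-y$ is normal at the foot point from first-order optimality of $y$ for $z\mapsto |x-z|^2$ on the $C^1$ hypersurface $\partial\Omega$, which is more self-contained and makes no appeal to the cited reference. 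For the Lipschitz regularity of the inverse you correctly identify $\Psi^{-1}=(b_\Omega,P_{\partial\Omega})$ and note, in fact more carefully than the paper's ``it is then enough to establish the lemma for any such fixed $\epsilon$'', that the Lipschitz bound on $P_{\partial\Omega}$ degenerates as one exhausts the tube, so bi-Lipschitzness is asserted on $\{\delta_{\partial\Omega}\le r\}$ for each $r<\rho$; the paper relies on \cite[Theorem 5.1]{DelZol94} and \cite[Eq. (A.7)]{Dal18} for the same fact. Finally, for (iii) the paper cites \cite[Thm 5.5, 5.6]{DelZol94} and \cite[Lemma 14.16]{GT01} for the $C^k$ regularity of $b_\Omega$ and $C^{k-1}$ regularity of $P_{\partial\Omega}$, while you apply the inverse function theorem directly, using that on $T_y\partial\Omega$ the differential of $\Psi$ is $\operatorname{Id}-t\,s_y$ with eigenvalues $1-t\kappa_i(y)\ne 0$ by the bound $|\kappa_i|\le\rho^{-1}$ and $|t|<\rho$, together with $\mathcal{N}(y)\perp T_y\partial\Omega$; combined with the global injectivity already established, this yields the $C^{k-1}$ diffeomorphism without any external reference. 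In short: same key geometric input from the uniform ball condition, but your argument for (ii) and (iii) is more elementary and self-contained, at the cost of a little extra computation.
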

\begin{proof}[Proof of \Cref{S2L2}]
	We fix $0<\epsilon<\rho(\partial\Omega)$, set $I_{\epsilon}:=(-\epsilon,\epsilon)$ and consider $\Psi_{\epsilon}:=\Psi|_{I_{\epsilon\times \partial\Omega}}$. It is then enough to establish the lemma for any such fixed $\epsilon$. Clearly $\Psi_{\epsilon}$ is Lipschitz continuous and of class $C^{k-1}$ if $\partial\Omega\in C^k$. Since $\epsilon<\rho(\partial\Omega)$ it follows from (i) and (ii) of \Cref{S2L1} that $|y+t\mathcal{N}(y)-z|\geq |t|$ for every $|t|\leq \epsilon$ and all $z\in \partial\Omega$. Further, $|y+t\mathcal{N}(y)-y|=|t|$ so that $y$ realises the distance between $y+t\mathcal{N}(y)$ and $\partial\Omega$. Using once more that $\epsilon<\rho(\partial\Omega)$ we deduce $y+t\mathcal{N}(y)\in \operatorname{Unp}(\partial\Omega)$ and thus $P_{\partial\Omega}(y+t\mathcal{N}(y))=y$ for every $y\in \partial\Omega$ and all $|t|<\epsilon$, where we recall that $P_{\partial\Omega}$ denotes the nearest point projection. To prove injectivity of $\Psi_{\epsilon}$, we observe that if $y+t\mathcal{N}(y)=z+s\mathcal{N}(z)$ for $|t|,|s|<\epsilon$, $y,z\in \partial\Omega$, then $y=P_{\partial\Omega}(y+t\mathcal{N}(y))=P_{\partial\Omega}(z+s\mathcal{N}(z))=z$. Knowing that $y=z$ we find $y+t\mathcal{N}(y)=z+s\mathcal{N}(z)=y+s\mathcal{N}(y)$ which immediately yields $t=s$ and establishes injectivity of $\Psi_{\epsilon}$. To characterise the image we notice that we had already established that $\delta_{\partial\Omega}(y+t\mathcal{N}(y))=|t|$ for every $|t|<\epsilon$ and thus $\Psi_{\epsilon}(I_{\epsilon}\times \partial\Omega)\subset \{x\in \mathbb{R}^n\mid \delta_{\partial\Omega}(x)<\epsilon\}$. Conversely, if $w\in \{x\in \mathbb{R}^n\mid \delta_{\partial\Omega}(x)<\epsilon\}$, then since $\epsilon<\rho(\partial\Omega)$ we deduce that $w\in \operatorname{Unp}(\partial\Omega)$ and thus admits a unique projection $z=P_{\partial\Omega}(w)\in \partial\Omega$. Consequently $w\in B_{\epsilon}(z)$ and it then follows from \cite[Equation (A.6)]{Dal18} that $w=z+b_{\Omega}(w)\mathcal{N}(z)$. Since $|b_{\Omega}(w)|=\delta_{\partial\Omega}(w)<\epsilon$ we conclude that $w\in \Psi_{\epsilon}\left(I_{\epsilon}\times \partial\Omega\right)$ which overall shows that $\Psi_{\epsilon}\left(I_{\epsilon}\times \partial\Omega\right)=\{x\in \mathbb{R}^n\mid \delta_{\partial\Omega}(x)<\epsilon\}$. We further notice that the inverse of $\Psi_{\epsilon}$ is given by $(b_{\Omega}\times P_{\partial\Omega})(w)=(b_{\Omega}(w),P_{\partial\Omega}(w))$ and that $b_{\Omega}$ and $P_{\partial\Omega}$ are Lipschitz continuous, cf. \cite[Theorem 5.1]{DelZol94} and \cite[Equation (A.7)]{Dal18} so that we overall conclude that if $0<\epsilon<\rho(\partial\Omega)$, then $\Psi_{\epsilon}$ defines a bi-Lipschitz homeomorphism onto $\{x\in \mathbb{R}^n\mid \delta_{\partial\Omega}(x)<\epsilon\}$.
	We are left with observing that if $\partial\Omega\in C^k$, then $b_{\Omega}\in C^k$ and $P_{\partial\Omega}\in C^{k-1}$, cf. \cite[Theorem 5.5, Theorem 5.6]{DelZol94}, \cite[Lemma 14.16]{GT01}, which proves the lemma.
\end{proof}
\section{Main results}
\subsection{A homogeneous Gaffney inequality}
\label{S31}
\begin{thm}[Homogeneous Gaffney inequality]
	\label{S3T1}
	Let $\Omega\subset\mathbb{R}^n$ be a bounded domain with $C^{1,1}$-boundary and let $\rho(\partial\Omega)$ denote the reach of $\partial\Omega$. Then for all $B\in \left(H^1(\Omega)\right)^n$ the following holds
	\begin{gather}
		\label{S3E1}
		\frac{\|B\|^2_{L^2(\Omega)}}{\rho^2(\partial\Omega)}+\|\nabla B\|^2_{L^2(\Omega)}\leq C_1(n)\left(\frac{\|B\|^2_{L^2(\Omega)}}{\rho^2(\partial\Omega)}+\|\operatorname{curl}(B)\|^2_{L^2(\Omega)}+\|\operatorname{div}(B)\|^2_{L^2(\Omega)}\right)\text{ if }B\parallel \partial\Omega
		\\
		\label{S3E2}
		\frac{\|B\|^2_{L^2(\Omega)}}{\rho^2(\partial\Omega)}+\|\nabla B\|^2_{L^2(\Omega)}\leq C_2(n)\left(\frac{\|B\|^2_{L^2(\Omega)}}{\rho^2(\partial\Omega)}+\|\operatorname{curl}(B)\|^2_{L^2(\Omega)}+\|\operatorname{div}(B)\|^2_{L^2(\Omega)}\right)\text{ if }B\perp \partial\Omega
	\end{gather}
	where $C_1(n):=1+(1+\sqrt{1+n})^2$ and $C_2(n):=1+(n-1+\sqrt{n-1}\sqrt{2n-1})^2$.
\end{thm}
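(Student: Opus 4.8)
We combine the Gaffney--Friedrichs--Morrey integration by parts identity with a scale-invariant trace inequality with explicit constant, and then absorb the boundary contribution and optimise a free parameter. By density it is enough to prove (\ref{S3E1}),(\ref{S3E2}) for $B\in C^1(\overline{\Omega})^n$ with $B\parallel\partial\Omega$ (resp.\ $B\perp\partial\Omega$), since every term involved is $H^1(\Omega)$-continuous. Starting from the pointwise identity $|\nabla B|^2=\tfrac12\sum_{i,j}(\partial_jB^i-\partial_iB^j)^2+\sum_{i,j}(\partial_jB^i)(\partial_iB^j)$ and integrating the last sum by parts twice (equivalently, applying the divergence theorem to the vector field with components $V^i=\sum_j(\partial_jB^i)B^j-(\operatorname{div}B)B^i$) one obtains
\[
\|\nabla B\|^2_{L^2(\Omega)}=\|\operatorname{curl}B\|^2_{L^2(\Omega)}+\|\operatorname{div}B\|^2_{L^2(\Omega)}+\mathcal{B},\qquad \mathcal{B}:=\int_{\partial\Omega}\Big(\textstyle\sum_{i,j}(\partial_jB^i)B^j\mathcal{N}_i-(\operatorname{div}B)(B\cdot\mathcal{N})\Big)\,d\mathcal{H}^{n-1}.
\]
Since $\mathcal{N}=\nabla b_\Omega$ on $\partial\Omega$, the boundary term rewrites using the boundary condition: for $B\parallel\partial\Omega$ one gets $\mathcal{B}=-\int_{\partial\Omega}\langle\operatorname{Hess}(b_\Omega)B,B\rangle\,d\mathcal{H}^{n-1}$, where the eigenvalues of $\operatorname{Hess}(b_\Omega)$ tangent to $\partial\Omega$ are the principal curvatures $\kappa_i$, bounded by $|\kappa_i|\le\rho^{-1}(\partial\Omega)$ via the interior/exterior ball characterisation of the reach in \Cref{S2L1}; for $B\perp\partial\Omega$ one gets $\mathcal{B}=-\int_{\partial\Omega}(\textstyle\sum_i\kappa_i)|B|^2\,d\mathcal{H}^{n-1}$ with $|\sum_i\kappa_i|\le(n-1)\rho^{-1}(\partial\Omega)$. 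In both cases $|\mathcal{B}|\le a\,\rho^{-1}(\partial\Omega)\int_{\partial\Omega}|B|^2\,d\mathcal{H}^{n-1}$, with $a=1$ if $B\parallel\partial\Omega$ and $a=n-1$ if $B\perp\partial\Omega$.

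The second ingredient is the homogeneous trace inequality
\[
\int_{\partial\Omega}|B|^2\,d\mathcal{H}^{n-1}\le\Big(n+\tfrac1\epsilon\Big)\frac{\|B\|^2_{L^2(\Omega)}}{\rho(\partial\Omega)}+\epsilon\,\rho(\partial\Omega)\,\|\nabla B\|^2_{L^2(\Omega)}\qquad(\epsilon>0).
\]
Here \Cref{S2L2} is the essential tool: on $\Omega\cap\{\delta_{\partial\Omega}<\rho(\partial\Omega)\}$ the map $(t,y)\mapsto y+t\mathcal{N}(y)$, $t\in(-\rho(\partial\Omega),0)$, is a bi-Lipschitz parametrisation whose Jacobian is $\mathcal{J}(t,y)=\prod_{i=1}^{n-1}(1+t\kappa_i(y))$, and $|\kappa_i|\le\rho^{-1}(\partial\Omega)$ gives $\big(1+t/\rho(\partial\Omega)\big)^{n-1}\le\mathcal{J}(t,y)$ there. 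Applying the divergence theorem to $W:=F\,u^2\,\nabla b_\Omega$ for a scalar component $u=B^k$, with the weight $F:=\big(1+b_\Omega/\rho(\partial\Omega)\big)^n/\mathcal{J}$ extended by $0$ outside the tubular neighbourhood — so that $W$ is Lipschitz, vanishes continuously on $\{\delta_{\partial\Omega}=\rho(\partial\Omega)\}$, and satisfies $W\cdot\mathcal{N}=u^2$ on $\partial\Omega$ — and using $|\nabla b_\Omega|=1$, the identity $\operatorname{div}(F\nabla b_\Omega)=\chi'(b_\Omega)/\mathcal{J}$ with $\chi(t)=(1+t/\rho(\partial\Omega))^n$ (hence $0\le\operatorname{div}(F\nabla b_\Omega)\le n/\rho(\partial\Omega)$) together with $0\le F\le1$, one gets $\int_{\partial\Omega}u^2\le\big(\tfrac{n}{\rho(\partial\Omega)}+\tfrac1\eta\big)\|u\|^2_{L^2(\Omega)}+\eta\|\nabla u\|^2_{L^2(\Omega)}$ after a Young inequality applied to the mixed term $2Fu\,\nabla u\cdot\nabla b_\Omega$. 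Taking $\eta=\epsilon\,\rho(\partial\Omega)$ and summing over $k$ yields the displayed trace inequality.

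Combining the two steps, insertion of the trace inequality into the bound for $|\mathcal{B}|$ gives $|\mathcal{B}|\le a\big(n+\tfrac1\epsilon\big)\rho^{-2}(\partial\Omega)\|B\|^2_{L^2(\Omega)}+a\epsilon\|\nabla B\|^2_{L^2(\Omega)}$; as soon as $a\epsilon<1$ the gradient contribution is absorbed into $\|\nabla B\|^2_{L^2(\Omega)}$ on the left-hand side of the Gaffney identity, and adding $\rho^{-2}(\partial\Omega)\|B\|^2_{L^2(\Omega)}$ to both sides produces the asserted inequality with constant $1+\frac{a(n+1/\epsilon)}{1-a\epsilon}$. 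Minimising $\epsilon\mapsto\frac{a(n+1/\epsilon)}{1-a\epsilon}$ over $0<\epsilon<1/a$ (the critical point being $\epsilon=(\sqrt{a+n}-\sqrt a)/(n\sqrt a)$) gives the value $(a+\sqrt{a(a+n)})^2$, which for $a=1$ equals $(1+\sqrt{1+n})^2$ and for $a=n-1$ equals $(n-1+\sqrt{n-1}\sqrt{2n-1})^2$; this produces precisely $C_1(n)$ and $C_2(n)$.

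The point requiring most care is the regularity bookkeeping for a merely $C^{1,1}$-domain: $b_\Omega$ is only $C^{1,1}$, so $\operatorname{Hess}(b_\Omega)$ is bounded but not continuous and $\operatorname{div}(F\nabla b_\Omega)$ must be read in the almost-everywhere/distributional sense; one has to check that $W$ is admissible in the divergence theorem (it is Lipschitz and vanishes continuously across $\{\delta_{\partial\Omega}=\rho(\partial\Omega)\}$, which suffices), that the Jacobian formula and the identification of the tangential eigenvalues of $\operatorname{Hess}(b_\Omega)$ with the $\kappa_i$ survive in this regularity (via \Cref{S2L2} and, where convenient, a smooth approximation of $\partial\Omega$), and that smooth tangent, respectively normal, vector fields are dense in the corresponding $H^1$-spaces. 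Everything else is elementary calculus.
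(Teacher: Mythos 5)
Your proposal follows the paper's overall strategy closely: you first establish the Gaffney--Friedrichs--Morrey integration-by-parts identity (the paper's \Cref{S4L2}), bound the boundary term by $\rho^{-1}(\partial\Omega)$ times $\int_{\partial\Omega}|B|^2$ using $|\kappa_i|\leq\rho^{-1}(\partial\Omega)$, then invoke a homogeneous trace inequality (the paper's \Cref{S4L1}), absorb the gradient contribution, and optimise over $\epsilon$. The identity, the bound on $|\mathcal{B}|$, the form of the trace inequality, and the optimisation (critical point $\epsilon=(\sqrt{a+n}-\sqrt a)/(n\sqrt a)$, optimal value $(a+\sqrt{a(a+n)})^2$, which expands to $C_1(n)-1$ and $C_2(n)-1$) all agree with the paper, up to a harmless opposite sign convention for $\kappa_i$.

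The one genuinely different ingredient is your proof of the trace inequality. The paper extends the normal as $X(y)=\psi(t)\mathcal{N}(x)$ with a piecewise-linear cutoff $\psi(t)=1-t/(\alpha\rho)$ and uses $|\kappa_j|\leq\rho^{-1}$ to bound $\operatorname{div}(X)$ by $\rho^{-1}(\alpha^{-1}+n-1)$, finally sending $\alpha\nearrow1$ to reach the constant $n/\rho$. You instead take $W=Fu^2\nabla b_\Omega$ with the weight $F=(1+b_\Omega/\rho)^n/\mathcal{J}$ built from the tubular-coordinate Jacobian $\mathcal{J}=\prod_i(1+t\kappa_i)$, so that $\operatorname{div}(F\nabla b_\Omega)=\chi'(b_\Omega)/\mathcal{J}$ with $\chi(t)=(1+t/\rho)^n$ is an exact algebraic expression landing in $[0,n/\rho]$ with no limiting argument, and $0\leq F\leq 1$ handles the mixed term. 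This is a slicker closed-form version of the same computation, and it buys you the sharp constant $n/\rho$ directly. The trade-off is regularity: for a $C^{1,1}$ domain the principal curvatures are only in $L^\infty(\partial\Omega)$, so $\mathcal{J}$ and hence $F$ are bounded but in general not continuous across tangential directions; your claim that $W$ is Lipschitz is therefore not accurate in this regularity class, although the paper's $X=\psi(b_\Omega)\,\mathcal{N}\circ P_{\partial\Omega}$ genuinely is Lipschitz. This is not a fatal flaw --- the tangential derivatives of $F$ never actually appear since $\operatorname{div}(F\nabla b_\Omega)=\partial_tF+F\,\Delta b_\Omega$ only involves the normal derivative of $F$, so the Gauss--Green identity for $W$ can be recovered either by the coarea/Fubini argument in tubular coordinates, or by the smooth approximation of $\partial\Omega$ you mention --- but the justification does require more than the single-sentence appeal to Lipschitz continuity. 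Everything else, including the final bookkeeping with the extra prefactor $1+\frac{a(n+1/\epsilon)}{1-a\epsilon}$ after adding $\rho^{-2}\|B\|^2_{L^2(\Omega)}$ to both sides (which tacitly uses $\frac{1}{1-a\epsilon}\leq 1+\frac{a(n+1/\epsilon)}{1-a\epsilon}$, valid since $\epsilon\leq n+1/\epsilon$), is correct and matches the paper.
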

In the following let $C^T_G(\Omega,n)$ and $C^N_G(\Omega,n)$ denote the constant $C_G(\Omega,n)$ from (\ref{S1E3}) under the boundary conditions $B\parallel \partial\Omega$ and $B\perp \partial\Omega$ respectively. It then follows from (\ref{S1E10})
\begin{cor}[Estimates for Gaffney's constant]
	\label{S3C2}
	Let $\Omega\subset\mathbb{R}^n$ be a bounded domain with $C^{1,1}$-boundary and let $\rho(\partial\Omega)$ denote the reach of $\partial\Omega$. Then we have the following inequalities
	\begin{gather}
		\label{S3E3}
		C^T_G(\Omega,n)\leq C_1(n)\max\{\rho^2(\partial\Omega),\rho^{-2}(\partial\Omega)\}\text{, }C^N_G(\Omega,n)\leq C_2(n)\max\{\rho^2(\partial\Omega),\rho^{-2}(\partial\Omega)\}.
	\end{gather}
\end{cor}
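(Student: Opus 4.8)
\textbf{Proof proposal for \Cref{S3C2}.}

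The plan is to derive \eqref{S3E3} as a direct consequence of \Cref{S3T1} together with the definitions of $C^T_G(\Omega,n)$ and $C^N_G(\Omega,n)$ as the optimal constants in \eqref{S1E3}. Fix $\Omega\in\operatorname{Sub}_c(\mathbb{R}^n)$ and write $\rho\equiv\rho(\partial\Omega)$, which by \Cref{S2L1}(ii) satisfies $0<\rho<\infty$, so all divisions by $\rho$ below are legitimate. Consider first the tangential case $B\parallel\partial\Omega$ and abbreviate $Q(B):=\|\operatorname{curl}(B)\|^2_{L^2(\Omega)}+\|\operatorname{div}(B)\|^2_{L^2(\Omega)}$. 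Starting from \eqref{S3E1} and multiplying through appropriately, the strategy is to compare the homogeneous left- and right-hand sides with the inhomogeneous quantities $\|B\|^2_{H^1(\Omega)}=\|B\|^2_{L^2(\Omega)}+\|\nabla B\|^2_{L^2(\Omega)}$ and $Q(B)+\|B\|^2_{L^2(\Omega)}$ using the elementary bounds $\min\{1,\rho^{-2}\}\,a\le a+\rho^{-2}a_0$ and $\rho^{-2}a_0+a\le \max\{1,\rho^{-2}\}(a_0+a)$ for nonnegative reals.

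Concretely: first I would note that $\|B\|^2_{H^1(\Omega)}=\|B\|^2_{L^2(\Omega)}+\|\nabla B\|^2_{L^2(\Omega)}\le \max\{1,\rho^2\}\left(\rho^{-2}\|B\|^2_{L^2(\Omega)}+\|\nabla B\|^2_{L^2(\Omega)}\right)$, since each of the two summands on the left is bounded by $\max\{1,\rho^2\}$ times the corresponding summand inside the bracket on the right. Next, applying \eqref{S3E1} to the bracketed expression gives
\begin{gather}
	\nonumber
	\|B\|^2_{H^1(\Omega)}\le \max\{1,\rho^2\}\,C_1(n)\left(\rho^{-2}\|B\|^2_{L^2(\Omega)}+Q(B)\right).
\end{gather}
Finally I would bound $\rho^{-2}\|B\|^2_{L^2(\Omega)}+Q(B)\le \max\{1,\rho^{-2}\}\left(\|B\|^2_{L^2(\Omega)}+Q(B)\right)$ by the same summand-wise comparison. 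Combining the last two displays yields
\begin{gather}
	\nonumber
	\|B\|^2_{H^1(\Omega)}\le \max\{1,\rho^2\}\max\{1,\rho^{-2}\}\,C_1(n)\left(\|B\|^2_{L^2(\Omega)}+Q(B)\right),
\end{gather}
and since $\max\{1,\rho^2\}\max\{1,\rho^{-2}\}=\max\{\rho^2,\rho^{-2}\}$ (check the two cases $\rho\ge 1$ and $\rho<1$ separately), this is precisely \eqref{S1E3} with constant $C_1(n)\max\{\rho^2,\rho^{-2}\}$. By definition of $C^T_G(\Omega,n)$ as the smallest admissible constant in \eqref{S1E3}, we conclude $C^T_G(\Omega,n)\le C_1(n)\max\{\rho^2,\rho^{-2}\}$. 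The normal case $B\perp\partial\Omega$ is identical verbatim, using \eqref{S3E2} and $C_2(n)$ in place of \eqref{S3E1} and $C_1(n)$, and gives $C^N_G(\Omega,n)\le C_2(n)\max\{\rho^2,\rho^{-2}\}$.

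There is essentially no obstacle here: the argument is a bookkeeping exercise in the scaling weights, and the only point requiring a moment's care is the identity $\max\{1,\rho^2\}\max\{1,\rho^{-2}\}=\max\{\rho^2,\rho^{-2}\}$ and the summand-wise comparisons, both of which are trivial once one splits into $\rho\ge 1$ and $\rho<1$. One could alternatively phrase the whole thing as the single observation that \eqref{S3E1}–\eqref{S3E2} are \eqref{S1E3} rewritten with the weight $\rho^{-2}$ in front of the $L^2$-term, and that replacing a weight $w>0$ in front of $\|B\|^2_{L^2(\Omega)}$ on both sides of such an inequality by the weight $1$ costs at most a factor $\max\{w,w^{-1}\}$; applied with $w=\rho^{-2}$ this is exactly \eqref{S3E3}. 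I would present the short computational version above as it makes the constant completely transparent.
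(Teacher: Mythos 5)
Your proposal is correct and is essentially the argument the paper intends: the corollary is stated as an immediate consequence of Theorem \ref{S3T1} via the weight-conversion estimate (\ref{S1E10}), which the paper leaves implicit, and your summand-wise comparison with the identity $\max\{1,\rho^2\}\max\{1,\rho^{-2}\}=\max\{\rho^2,\rho^{-2}\}$ is exactly the bookkeeping that derivation requires. No gaps; the same reasoning applies verbatim in the normal case with $C_2(n)$.
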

\begin{rem}
	\label{S3R3}
\begin{enumerate}
	\item We use the notation $C^{T,*}_G(n)$,$C^{N,*}_G(n)$ for the optimal values of the constants in (\ref{S1E8}) under tangent and normal boundary conditions respectively. If $\Omega\subset\mathbb{R}^n$ is any bounded $C^{1,1}$-domain and $B\in \left(H^1(\Omega)\right)^n$ is any $H^1$-vector field with $B|_{\partial\Omega}=0$, then one readily checks that $\|\nabla B\|^2_{L^2(\Omega)}=\|\operatorname{curl}(B)\|^2_{L^2(\Omega)}+\|\operatorname{div}(B)\|^2_{L^2(\Omega)}$ which implies $C^{T,*}_G(n)\geq 1$ and $C^{N,*}_G(n)\geq 1$ for all $n$.
	\item More restrictive lower bounds can be obtained by introducing the spaces of harmonic Neumann fields $\mathcal{H}_N(\Omega)$ and harmonic Dirichlet fields $\mathcal{H}_D(\Omega)$ respectively as follows
	\begin{gather}
		\nonumber
		\mathcal{H}_N(\Omega):=\left\{\Gamma\in \left(H^1(\Omega)\right)^n\mid \operatorname{curl}(\Gamma)=0\text{, }\operatorname{div}(\Gamma)=0\text{, }\Gamma\parallel\partial\Omega\right\},
		\\
		\nonumber
		\mathcal{H}_D(\Omega):=\left\{\Gamma\in \left(H^1(\Omega)\right)^n\mid \operatorname{curl}(\Gamma)=0\text{, }\operatorname{div}(\Gamma)=0\text{, }\Gamma\perp\partial\Omega\right\}.
	\end{gather} 
	It is well-known that $\dim(\mathcal{H}_N(\Omega))=\operatorname{dim}\left(H^1_{\operatorname{dR}(\Omega)}\right)$ and $\dim(\mathcal{H}_D(\Omega))=\dim\left(H^{n-1}_{\operatorname{dR}}(\Omega)\right)$, cf. \cite[Theorem 2.6.1 \& Corollary 2.6.2]{S95}, where $H^k_{\operatorname{dR}}(\Omega)$ denotes the $k$-th de Rham cohomology group of $\Omega$. It then immediately follows from (\ref{S1E8}) that we have the following lower bounds
	\begin{gather}
		\label{S3Extra1}
		1+\sup_{\Omega\in \operatorname{Sub}_c(\mathbb{R}^n)}\sup_{\Gamma\in \mathcal{H}_N(\Omega)\setminus \{0\}}\frac{\rho^2(\partial\Omega)\|\nabla \Gamma\|^2_{L^2(\Omega)}}{\|\Gamma\|^2_{L^2(\Omega)}}\leq C^{T,*}_G(n),
		\\
		\label{S3Extra2}
		1+\sup_{\Omega\in \operatorname{Sub}_c(\mathbb{R}^n)}\sup_{\Gamma\in \mathcal{H}_D(\Omega)\setminus \{0\}}\frac{\rho^2(\partial\Omega)\|\nabla \Gamma\|^2_{L^2(\Omega)}}{\|\Gamma\|^2_{L^2(\Omega)}}\leq C^{N,*}_G(n).
	\end{gather}
	As an application of these formulas we can consider the physically most relevant case $n=3$. If $\Omega\subset\mathbb{R}^3$ is rotationally symmetric about the $z$-axis and of positive distance to the $z$-axis, then we can define the vector fields $Y(x,y,z):=(-y,x,0)$ and $\Gamma:=\frac{Y}{|Y|^2}$. Then, since $Y$ induces rotations around the $z$-axis we find $Y\parallel \partial\Omega$ and thus $\Gamma\parallel \partial\Omega$. Further, a direct computation yields $\operatorname{div}(\Gamma)=0=\operatorname{curl}(\Gamma)$ and so $\Gamma\in \mathcal{H}_N(\Omega)$. To obtain an explicit estimate we may take $\Omega$ to be a rotationally symmetric solid torus with minor radius $r$ and major radius $R$, $0<r<R<\infty$, and of aspect ratio $a:=\frac{R}{r}$. The reach of $\partial\Omega$ is then simply given by $\rho(\partial\Omega)=\min\{r,R-r\}=r\min\{1,a-1\}$ and the quantities $\|\Gamma\|^2_{L^2(\Omega)}$ and $\|\nabla \Gamma\|^2_{L^2(\Omega)}$ admit a closed form expression in terms of the aspect ratio and the radii $r,R$. One can verify that the maximum of the expression $\frac{\rho^2(\partial\Omega)\|\nabla \Gamma\|^2_{L^2(\Omega)}}{\|\Gamma\|^2_{L^2(\Omega)}}$ is achieved at an aspect ratio of $2$ and obtains from this the following non-trivial lower bound
	\begin{gather}
		\label{S3Extra3}
		C^{T,*}_G(3)\geq 1+\frac{1-\frac{2}{\sqrt{3}^3}}{6(2-\sqrt{3})}\approx 1.38259\dots
	\end{gather}
	By considering other rotationally symmetric domains one may potentially increase the lower bound on $C^{T,*}_G(3)$.
	\item If $n=2$, $\Omega\subset\mathbb{R}^2$ is a bounded $C^{1,1}$-domain and $B=(B^1,B^2)\in \left(H^1(\Omega)\right)^2$ is tangent to the boundary, then $B^\perp:=(-B^2,B^1)$ is normal to the boundary with $\|B^\perp\|_{L^2(\Omega)}=\|B\|_{L^2(\Omega)}$, $\|\nabla B^\perp\|_{L^2(\Omega)}=\|\nabla B\|_{L^2(\Omega)}$,$\|\operatorname{curl}(B^\perp)\|_{L^2(\Omega)}=\|\operatorname{div}(B)\|_{L^2(\Omega)}$,$\|\operatorname{div}(B^\perp)\|_{L^2(\Omega)}=\|\operatorname{curl}(B)\|_{L^2(\Omega)}$ from which one easily deduces that $C^{T,*}_G(2)=C^{N,*}_G(2)$.
\end{enumerate}
\end{rem}
\textbf{Open Question 1:} What is the exact value of the optimal constant $C^{T,*}_G(n)$ in the homogeneous Gaffney inequality (\ref{S1E8}) under tangent boundary conditions?
\newline
\newline
\textbf{Open Question 2:} What is the exact value of the optimal constant $C^{N,*}_G(n)$ in the homogeneous Gaffney inequality (\ref{S1E8}) under normal boundary conditions?
\subsection{A homogeneous Korn inequality}
\label{S32}
\begin{thm}[Homogeneous Korn inequality]
	\label{S3T4}
	Let $\Omega\subset\mathbb{R}^n$ be a bounded domain with $C^{1,1}$-boundary and let $\rho(\partial\Omega)$ denote the reach of $\partial\Omega$. Then for all $B\in \left(H^1(\Omega)\right)^n$ the following holds
	\begin{gather}
		\label{S3E4}
		\frac{\|B\|^2_{L^2(\Omega)}}{\rho^2(\partial\Omega)}+\|\nabla B\|^2_{L^2(\Omega)}\leq C_1(n)\left(\frac{\|B\|^2_{L^2(\Omega)}}{\rho^2(\partial\Omega)}+\|\operatorname{Sym}(\nabla B)\|^2_{L^2(\Omega)}\right) \text{ if }\parallel \partial\Omega,
		\\
		\label{S3E5}
		\frac{\|B\|^2_{L^2(\Omega)}}{\rho^2(\partial\Omega)}+\|\nabla B\|^2_{L^2(\Omega)}\leq C_2(n)\left(\frac{\|B\|^2_{L^2(\Omega)}}{\rho^2(\partial\Omega)}+\|\operatorname{Sym}(\nabla B)\|^2_{L^2(\Omega)}\right)\text{ if }B\perp\partial\Omega
	\end{gather}
	with $C_1(n)$,$C_2(n)$ as in \Cref{S3T1}.
\end{thm}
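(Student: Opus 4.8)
The plan is to run the proof of \Cref{S3T1} essentially unchanged, the only new ingredient being that the integration-by-parts identity underlying \Cref{S3T1} --- the one expressing $\operatorname{curl}(B),\operatorname{div}(B)$ through $\nabla B$ and a boundary curvature term --- is traded for the corresponding identity for $\operatorname{Sym}(\nabla B)$. Crucially, that new identity needs no separate computation: it falls out of \Cref{S3T1} together with one elementary algebraic fact.

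Namely, the definitions $\operatorname{Sym}(\nabla B)_{ij}=\tfrac{\partial_jB^i+\partial_iB^j}{2}$ and $\operatorname{curl}(B)_{ij}=\tfrac{\partial_jB^i-\partial_iB^j}{\sqrt2}$ give, for every $B\in(H^1(\Omega))^n$, the pointwise identity
\begin{gather}
\nonumber
2\,|\operatorname{Sym}(\nabla B)|^2+|\operatorname{curl}(B)|^2=2\,|\nabla B|^2 .
\end{gather}
Integrating this over $\Omega$ and eliminating $\|\operatorname{curl}(B)\|^2_{L^2(\Omega)}$ with the identity proved in \Cref{S3T1}, namely $\|\operatorname{curl}(B)\|^2_{L^2(\Omega)}+\|\operatorname{div}(B)\|^2_{L^2(\Omega)}=\|\nabla B\|^2_{L^2(\Omega)}+Q(B)$ --- where $Q(B)$ is the boundary integral of a quadratic form in $B|_{\partial\Omega}$ built from the shape operator of $\partial\Omega$ when $B\parallel\partial\Omega$, and from its trace (the mean curvature) when $B\perp\partial\Omega$ --- one obtains
\begin{gather}
\nonumber
2\,\|\operatorname{Sym}(\nabla B)\|^2_{L^2(\Omega)}=\|\nabla B\|^2_{L^2(\Omega)}+\|\operatorname{div}(B)\|^2_{L^2(\Omega)}-Q(B)
\end{gather}
with the very same $Q(B)$. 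Discarding the non-negative term $\|\operatorname{div}(B)\|^2_{L^2(\Omega)}$ leaves
\begin{gather}
\nonumber
\|\nabla B\|^2_{L^2(\Omega)}\leq 2\,\|\operatorname{Sym}(\nabla B)\|^2_{L^2(\Omega)}+|Q(B)| .
\end{gather}
Here $Q(B)$ enters with the sign opposite to the one it carries in \Cref{S3T1}, but this is immaterial, since the estimate of $Q(B)$ carried out there is two-sided: $|Q(B)|\leq\gamma\,\rho^{-1}(\partial\Omega)\,\|B\|^2_{L^2(\partial\Omega)}$, where $\gamma=1$ for $B\parallel\partial\Omega$ (the operator norm of the shape operator is at most $\rho^{-1}(\partial\Omega)$) and $\gamma=n-1$ for $B\perp\partial\Omega$ (the trace of the shape operator is bounded by $(n-1)\,\rho^{-1}(\partial\Omega)$).

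From here the argument would be word for word that of \Cref{S3T1}: one feeds in the quantitative Ehrling inequality for $H^1(\Omega)\ssubset L^2(\partial\Omega)$ established in this paper, applied componentwise to $B$, together with Young's inequality, to get $\|B\|^2_{L^2(\partial\Omega)}\leq\epsilon\,\rho(\partial\Omega)\,\|\nabla B\|^2_{L^2(\Omega)}+(n+\epsilon^{-1})\,\rho^{-1}(\partial\Omega)\,\|B\|^2_{L^2(\Omega)}$ for every $\epsilon>0$; one absorbs the resulting term $\gamma\,\epsilon\,\|\nabla B\|^2_{L^2(\Omega)}$ into the left-hand side (valid as long as $\gamma\,\epsilon<1$); one adds $\rho^{-2}(\partial\Omega)\,\|B\|^2_{L^2(\Omega)}$ to both sides; one uses $1+\tfrac{\gamma(n+\epsilon^{-1})}{1-\gamma\epsilon}\geq\tfrac{2}{1-\gamma\epsilon}$ (which holds because $n\geq2$ and $\gamma\epsilon<1$) to bound the coefficient of $\|\operatorname{Sym}(\nabla B)\|^2_{L^2(\Omega)}$ by that of $\rho^{-2}(\partial\Omega)\,\|B\|^2_{L^2(\Omega)}$, producing the common prefactor $1+\tfrac{\gamma(n+\epsilon^{-1})}{1-\gamma\epsilon}$; and one minimises this prefactor over $\epsilon$. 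That last step is exactly the one-variable minimisation of \Cref{S3T1}: the minimiser is $\epsilon^{-1}=\gamma+\sqrt{\gamma\,(\gamma+n)}$, the optimal value is $1+\big(\gamma+\sqrt{\gamma\,(\gamma+n)}\big)^2$, and this equals $C_1(n)=1+(1+\sqrt{1+n})^2$ when $\gamma=1$ and $C_2(n)=1+(n-1+\sqrt{n-1}\sqrt{2n-1})^2$ when $\gamma=n-1$. This proves (\ref{S3E4}) and (\ref{S3E5}).

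The one step I expect to need genuine care is the rigorous derivation of the underlying boundary identity for an arbitrary $B\in(H^1(\Omega))^n$ on a merely $C^{1,1}$-domain: by approximation with smooth vector fields, using that the signed distance function $b_\Omega$ is of class $C^{1,1}$ and that the principal curvatures of $\partial\Omega$ lie in $L^\infty(\partial\Omega)$ with bound $\rho^{-1}(\partial\Omega)$, so that the boundary quadratic form and its $L^\infty$-bound are legitimate. This, however, is already settled in the proof of \Cref{S3T1}, so the passage to the Korn inequality contributes nothing new. I would finally note that, since the argument simply throws away $\|\operatorname{div}(B)\|^2_{L^2(\Omega)}$, the constants $C_1(n),C_2(n)$ are presumably not optimal for the Korn inequality --- consistent with the paper's later remark that the best Korn constants may differ from the Gaffney ones.
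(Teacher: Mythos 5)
Your proposal is correct and follows essentially the same route as the paper: establish the identity $\|\nabla B\|^{2}_{L^{2}(\Omega)}=2\|\operatorname{Sym}(\nabla B)\|^{2}_{L^{2}(\Omega)}-\|\operatorname{div}(B)\|^{2}_{L^{2}(\Omega)}-(\text{boundary term})$, discard the non-negative $\|\operatorname{div}(B)\|^{2}_{L^{2}(\Omega)}$, bound the boundary term by $c_n\rho^{-1}\|B\|^{2}_{L^{2}(\partial\Omega)}$ using that the reach controls the principal curvatures, plug in the quantitative Ehrling-trace inequality, and optimise over $\epsilon$ exactly as in the Gaffney case. The only (cosmetic) difference is how you obtain the Korn boundary identity: you derive it from the Gaffney identity of \Cref{S4L2} together with the pointwise algebraic fact $2|\operatorname{Sym}(\nabla B)|^{2}+|\operatorname{curl}(B)|^{2}=2|\nabla B|^{2}$, whereas the paper in \Cref{S4L3} simply reruns the integration by parts with the opposite sign on the mixed term; the two derivations are equivalent, and your algebraic shortcut is a slightly cleaner way to see that the boundary term is the very same quadratic form as in the Gaffney case.
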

In the following let $C^T_K(\Omega,n)$ and $C^N_K(\Omega,n)$ denote the optimal constant $C_K(\Omega,n)$ from (\ref{S1E4}) under the boundary conditions $B\parallel \partial\Omega$ and $B\perp\partial\Omega$ respectively. It then follows from (\ref{S1E11})
\begin{cor}[Estimates for Korn's constant]
	\label{S3C5}
	Let $\Omega\subset\mathbb{R}^n$ be a bounded domain with $C^{1,1}$-boundary and let $\rho(\partial\Omega)$ denote the reach of $\partial\Omega$. Then we have the following inequalities
	\begin{gather}
		\label{S3E6}
		C^T_K(\Omega,n)\leq C_1(n)\max\{\rho^2(\partial\Omega),\rho^{-2}(\partial\Omega)\}\text{, }C^N_K(\Omega,n)\leq C_2(n)\max\{\rho^2(\partial\Omega),\rho^{-2}(\partial\Omega)\}.
	\end{gather}
\end{cor}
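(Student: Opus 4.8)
The plan is to read off the corollary directly from the homogeneous Korn inequality of \Cref{S3T4}, absorbing the mismatch between the scalings of $\|B\|^2_{L^2(\Omega)}$ and of $\|\nabla B\|^2_{L^2(\Omega)}$ into the geometric factor $\max\{\rho^2(\partial\Omega),\rho^{-2}(\partial\Omega)\}$; this is exactly the elementary passage encoded abstractly in (\ref{S1E11}), which I would re-derive in this concrete setting.

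First I would fix $B\in\left(H^1(\Omega)\right)^n$ with $B\parallel\partial\Omega$ and start from (\ref{S3E4}). Writing $m:=\min\{\rho^{-2}(\partial\Omega),1\}$ and $M:=\max\{\rho^{-2}(\partial\Omega),1\}$, both finite and strictly positive by part (ii) of \Cref{S2L1}, I bound the left-hand side of (\ref{S3E4}) from below by
\[
\frac{\|B\|^2_{L^2(\Omega)}}{\rho^2(\partial\Omega)}+\|\nabla B\|^2_{L^2(\Omega)}\ \geq\ m\,\|B\|^2_{H^1(\Omega)}
\]
and the parenthesis on its right-hand side from above by
\[
\frac{\|B\|^2_{L^2(\Omega)}}{\rho^2(\partial\Omega)}+\|\operatorname{Sym}(\nabla B)\|^2_{L^2(\Omega)}\ \leq\ M\left(\|B\|^2_{L^2(\Omega)}+\|\operatorname{Sym}(\nabla B)\|^2_{L^2(\Omega)}\right).
\]
Combining these two bounds with (\ref{S3E4}) yields $\|B\|^2_{H^1(\Omega)}\leq C_1(n)\,\tfrac{M}{m}\left(\|B\|^2_{L^2(\Omega)}+\|\operatorname{Sym}(\nabla B)\|^2_{L^2(\Omega)}\right)$.

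Next I would simplify the prefactor $\tfrac{M}{m}$ by a two-case analysis: if $\rho(\partial\Omega)\geq 1$ then $\rho^{-2}(\partial\Omega)\leq 1$, so $M=1$, $m=\rho^{-2}(\partial\Omega)$ and $\tfrac{M}{m}=\rho^2(\partial\Omega)$; if $\rho(\partial\Omega)\leq 1$ then $\rho^{-2}(\partial\Omega)\geq 1$, so $M=\rho^{-2}(\partial\Omega)$, $m=1$ and $\tfrac{M}{m}=\rho^{-2}(\partial\Omega)$; in both cases $\tfrac{M}{m}=\max\{\rho^2(\partial\Omega),\rho^{-2}(\partial\Omega)\}$. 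Since the resulting inequality holds for every tangent $B$ and $C^T_K(\Omega,n)$ is by definition the smallest admissible constant in (\ref{S1E4}) under $B\parallel\partial\Omega$, I conclude $C^T_K(\Omega,n)\leq C_1(n)\max\{\rho^2(\partial\Omega),\rho^{-2}(\partial\Omega)\}$. Running the identical argument with (\ref{S3E5}) and $B\perp\partial\Omega$ in place of (\ref{S3E4}), and with $C_2(n)$ in place of $C_1(n)$, gives the bound for $C^N_K(\Omega,n)$, which completes the proof. I do not anticipate a genuine obstacle here: once \Cref{S3T4} is in hand the argument is a one-line rescaling estimate, and the only point needing a moment's care is the elementary case distinction identifying $M/m$ with $\max\{\rho^2(\partial\Omega),\rho^{-2}(\partial\Omega)\}$. (The companion \Cref{S3C2} follows verbatim from \Cref{S3T1}.)
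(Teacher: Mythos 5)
Your proposal is correct and is essentially the paper's own argument: the paper invokes the relation \eqref{S1E11}, which was stated earlier as an "easy observation," and your case analysis on $M/m$ is precisely the short computation hidden behind that phrase. Nothing to add.
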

\begin{rem}
	\label{S3R6}
	\begin{enumerate}
		\item We use the notation $C^{T,*}_K(n)$,$C^{N,*}_K(n)$ for the optimal constants in (\ref{S1E9}) under tangent and normal boundary conditions respectively. If $\Omega\subset\mathbb{R}^n$ is any bounded $C^{1,1}$-domain and $B\in \left(H^1(\Omega)\right)^n$ is any vector field with $B|_{\partial\Omega}=0$, then one easily checks that $\|\nabla B\|^2_{L^2(\Omega)}=2\|\operatorname{Sym}\nabla B\|^2_{L^2(\Omega)}-\|\operatorname{div}(B)\|^2_{L^2(\Omega)}$. So, if $B=\operatorname{curl}(A)$ for some $(H^2(\Omega))^n$ vector field which is compactly supported in $\Omega$ we find $\|\nabla B\|^2_{L^2(\Omega)}=2\|\operatorname{Sym}\nabla B\|^2_{L^2(\Omega)}\geq \|\operatorname{Sym}\nabla B\|^2_{L^2(\Omega)}$ from which we deduce $C^{T,*}_K(n)\geq 1$ and $C^{N,*}_K(n)\geq 1$ for all $n$.
		\item To obtain an improved lower bound one can just like in the case of the Gaffney inequality define the space of Killing Neumann fields $\mathcal{K}_N(\Omega)$ and the space of Killing Dirichlet fields $\mathcal{K}_D(\Omega)$ as follows
		\begin{gather}
			\label{S3Extra4}
			\mathcal{K}_N(\Omega):=\left\{Y\in \left(H^1(\Omega)\right)^n\mid \operatorname{Sym}(\nabla Y)=0\text{, }Y\parallel \partial\Omega\right\},
			\\
			\label{S3Extra5}
			\mathcal{K}_D(\Omega):=\left\{Y\in \left(H^1(\Omega)\right)^n\mid \operatorname{Sym}(\nabla Y)=0\text{, }Y\perp \partial\Omega\right\}.
		\end{gather}
		We observe that the condition $\operatorname{Sym}(\nabla Y)=0$ implies that $Y$ satisfies the Killing equations and thus $\mathcal{K}_N(\Omega)$ and $\mathcal{K}_D(\Omega)$ consist of the Killing fields on $\Omega$ which are tangent and normal to the boundary respectively. One easily checks that if $Y$ satisfies the Killing equations, i.e. $\operatorname{Sym}(\nabla Y)=0$, then $Y(x)=A\cdot x+b$ for a suitable $b\in \mathbb{R}^n$ and $A\in \operatorname{Mat}_{n\times n}(\mathbb{R})$ with $A^{\operatorname{Tr}}+A=0$ so that $\mathcal{K}_N(\Omega)$ and $\mathcal{K}_D(\Omega)$ form finite dimensional vector spaces. We observe further that if $Y\in \mathcal{K}_D(\Omega)$ and $x\in \partial\Omega$ is such that $Y(x)\neq 0$, then $s(v)\cdot v=0$ for all $v\in T_x\partial\Omega$ where $s$ denotes the shape operator of $\partial\Omega$. Equivalently $\kappa_i(x)=0$ for all $i=1,\dots,n-1$. If $Y\in \mathcal{K}_D(\Omega)\setminus \{0\}$, then the zero set of $Y$ is at most $(n-2)$-dimensional, cf. \cite[Theorem 8.1.5]{Pe16}, so that we find $\kappa_i(x)=0$ for all $1\leq i\leq n-1$ for almost every $x\in \partial\Omega$. This implies that all geodesics on $\partial\Omega$ are also geodesics in $\mathbb{R}^n$ and thus straight lines, contradicting the boundedness of $\partial\Omega$. Consequently $\mathcal{K}_D(\Omega)=\{0\}$ and no information about $C^{N,*}_K(n)$ can be derived from this approach. On the other hand there are domains with $\mathcal{K}_N(\Omega)\neq \{0\}$ so that we obtain the non-trivial bound
		\begin{gather}
			\label{S3Extra6}
			1+\sup_{\Omega\in \operatorname{Sub}_c(\mathbb{R}^n)}\sup_{Y\in \mathcal{K}_N(\Omega)\setminus\{0\}}\frac{\rho^2(\partial\Omega)\|\nabla Y\|^2_{L^2(\Omega)}}{\|Y\|^2_{L^2(\Omega)}}\leq C^{T,*}_K(n).
		\end{gather}
		We can exploit this connection to show that the bound $C^{T,*}_K(n)\leq C_1(n)$ obtained in \Cref{S3T4} is asymptotically sharp. To see this we can consider any ball $\Omega=B_r$ of any fixed radius and centred at zero. We define $Y(x_1,\dots,x_n):=(-x_2,x_1,0,\dots,0)=-x_2e_1+x_1e_2$. Clearly $Y\parallel \partial B_r$ and one easily checks that $\operatorname{Sym}(\nabla Y)=0$ so that $Y\in \mathcal{K}_N(B_r)$ for every $r>0$. Since the reach of a ball is given by its radius, we find $\rho(\partial B_r)=r$ and so we find $\rho^2(\partial\Omega)\|\nabla Y\|^2_{L^2(\Omega)}=2r^2|B_r|=2r^{n+2}|B_1|$. Further, a direct computation, working in polar coordinates, yields $\|Y\|^2_{L^2(B_r)}=\frac{2r^{n+2}}{n+2}\frac{|\partial B_1|}{n}=\frac{2r^{n+2}}{n+2}|B_1|$. Combining these identities we find $\frac{\rho^2(\partial B_r)\|\nabla Y\|^2_{L^2(B_r)}}{\|Y\|^2_{L^2(B_r)}}=n+2$ and so (\ref{S3Extra6}) yields
		\begin{gather}
			\label{S3ExtraExtra}
			C^{T,*}_K(n)\geq n+3\text{ for all }n\geq 2.
		\end{gather}
		We finally observe that $\frac{C_1(n)}{n+3}\rightarrow 1$ as $n\rightarrow \infty$ which shows that our upper bound is asymptotically optimal.
	\end{enumerate}
\end{rem}
As a consequence of (\ref{S3ExtraExtra}) and \Cref{S3T4} we obtain the following, keeping in mind that upon expanding the square we have $C_1(n)=n+3+2\sqrt{1+n}$,
\begin{cor}[High dimensional asymptotics of the tangential homogeneous Korn constant]
	\label{S3T6}
	We have $n+3\leq C^{T,*}_K(n)\leq n+3+2\sqrt{1+n}$. In particular, $\frac{C^{T,*}_K(n)}{n}\rightarrow 1$ as $n\rightarrow \infty$.
\end{cor}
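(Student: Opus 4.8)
The statement is a direct synthesis of two facts already available, followed by an elementary limit, so the plan is essentially bookkeeping. For the lower bound I would simply invoke inequality (\ref{S3ExtraExtra}) from \Cref{S3R6}: testing (\ref{S3Extra6}) against the rotational Killing field $Y=-x_2e_1+x_1e_2$ on an arbitrary ball $B_r$ centred at the origin — for which $\rho(\partial B_r)=r$, $\|\nabla Y\|^2_{L^2(B_r)}=2|B_r|$ and $\|Y\|^2_{L^2(B_r)}=\frac{2r^2}{n+2}|B_r|$, hence $\frac{\rho^2(\partial B_r)\|\nabla Y\|^2_{L^2(B_r)}}{\|Y\|^2_{L^2(B_r)}}=n+2$ — already yields $C^{T,*}_K(n)\geq n+3$ for every $n\geq 2$ with no additional work.

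For the upper bound I would recall that $C^{T,*}_K(n)$ is, by definition, the \emph{smallest} constant for which (\ref{S1E9}) holds under tangent boundary conditions. \Cref{S3T4}, specifically inequality (\ref{S3E4}), establishes that $C_1(n)$ is admissible in that inequality uniformly over all bounded $C^{1,1}$-domains $\Omega$ and all tangent $H^1$-vector fields $B$; hence $C^{T,*}_K(n)\leq C_1(n)$. Expanding the square gives $C_1(n)=1+(1+\sqrt{1+n})^2=n+3+2\sqrt{1+n}$, which is precisely the claimed upper bound. Combining the two estimates yields the sandwich $n+3\leq C^{T,*}_K(n)\leq n+3+2\sqrt{1+n}$.

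Finally, dividing through by $n$ gives $1+\tfrac{3}{n}\leq \tfrac{C^{T,*}_K(n)}{n}\leq 1+\tfrac{3}{n}+\tfrac{2\sqrt{1+n}}{n}$, and since $\tfrac{\sqrt{1+n}}{n}\rightarrow 0$ both outer expressions converge to $1$ as $n\rightarrow\infty$, so the squeeze theorem forces $\tfrac{C^{T,*}_K(n)}{n}\rightarrow 1$. Since the genuinely substantive ingredients — the ball/Killing-field computation underpinning (\ref{S3ExtraExtra}) and the homogeneous Korn inequality of \Cref{S3T4} — are already in hand, there is no real obstacle in this corollary; the only point requiring a moment's care is the identification of $C^{T,*}_K(n)$ as the optimal constant in (\ref{S1E9}), so that \Cref{S3T4} genuinely bounds it from above rather than merely exhibiting one admissible value.
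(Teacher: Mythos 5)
Your proposal is correct and coincides with the paper's argument: the lower bound is exactly (\ref{S3ExtraExtra}) derived from the rotational Killing field on a ball, the upper bound is $C_1(n)$ from \Cref{S3T4} rewritten as $n+3+2\sqrt{1+n}$, and the asymptotics follow by squeezing. No meaningful divergence from the paper's route.
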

\textbf{Open Question 3:} What is the exact value of the optimal constant $C^{T,*}_K(n)$ in the homogeneous Korn inequality (\ref{S1E9}) under tangent boundary conditions?
\newline
\newline
\textbf{Open Question 4:} What is the exact value of the optimal constant $C^{N,*}_K(n)$ in the homogeneous Korn inequality (\ref{S1E9}) under normal boundary conditions?
\section{Proofs of the main results}
\subsection{Quantitative Trace inequality}
In this subsection we prove a quantitative version of (\ref{S1E2}) which is the main ingredient in establishing \Cref{S3T1} and \Cref{S3T4}.
\begin{lem}[Quantitative Ehrling-trace inequality]
	\label{S4L1}
	Let $\Omega\subset\mathbb{R}^n$ be a bounded $C^{1,1}$-domain and let $\rho\equiv \rho(\partial\Omega)$ denote the reach of $\partial\Omega$. Then for every $B\in \left(H^1(\Omega)\right)^n$ and every $\epsilon>0$ the following inequality holds:
	\begin{gather}
		\label{S4E1}
		\|B\|^2_{L^2(\partial\Omega)}\leq \epsilon\rho(\partial\Omega)\|\nabla B\|^2_{L^2(\Omega)}+\frac{n+\frac{1}{\epsilon}}{\rho(\partial\Omega)}\|B\|^2_{L^2(\Omega)}.
	\end{gather}
\end{lem}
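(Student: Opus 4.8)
The plan is to reduce (\ref{S4E1}) to the case of a scalar function and then run a Gauss--Green argument with a vector field adapted to the normal collar of $\partial\Omega$ furnished by \Cref{S2L2}. Writing $B=\sum_{i=1}^{n}B^{i}e_{i}$, it is enough to prove that every scalar $f\in H^{1}(\Omega)$ satisfies $\|f\|_{L^{2}(\partial\Omega)}^{2}\leq\epsilon\rho\|\nabla f\|_{L^{2}(\Omega)}^{2}+\rho^{-1}\bigl(n+\tfrac{1}{\epsilon}\bigr)\|f\|_{L^{2}(\Omega)}^{2}$, since summing this over $i$ with $f=B^{i}$ then yields (\ref{S4E1}).

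Fix $0<\rho'<\rho$. On the inner collar $\Sigma_{\rho'}:=\{x\in\overline{\Omega}\mid 0\leq\delta_{\partial\Omega}(x)\leq\rho'\}$ the signed distance function $b_{\Omega}$ is of class $C^{1,1}$ (this is the regularity of $b_{\Omega}$ near $\partial\Omega$ already invoked in the proof of \Cref{S2L2}), with $|\nabla b_{\Omega}|=1$ and $\nabla b_{\Omega}=\mathcal{N}\circ P_{\partial\Omega}$ a.e., and $\Delta b_{\Omega}(x)=\sum_{i=1}^{n-1}\kappa_{i}(P_{\partial\Omega}(x))/(1+b_{\Omega}(x)\kappa_{i}(P_{\partial\Omega}(x)))$, where the denominators are positive because $|b_{\Omega}|<\rho'<\rho$ and $\rho\|\kappa_{i}\|_{L^{\infty}(\partial\Omega)}\leq 1$. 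I would then take $X$ to equal $(\rho'+b_{\Omega})\nabla b_{\Omega}$ on $\Sigma_{\rho'}$ and $0$ on $\overline{\Omega}\setminus\Sigma_{\rho'}$. Since the scalar factor $\rho'+b_{\Omega}$ vanishes on the inner edge $\{\delta_{\partial\Omega}=\rho'\}$, the field $X$ is Lipschitz on $\overline{\Omega}$, hence lies in $W^{1,\infty}(\Omega)^{n}$; moreover $|X|\leq\rho'$ everywhere and $X|_{\partial\Omega}=\rho'\mathcal{N}$. A direct computation on $\Sigma_{\rho'}$ gives $\operatorname{div}X=|\nabla b_{\Omega}|^{2}+(\rho'+b_{\Omega})\Delta b_{\Omega}=1+(\rho'+b_{\Omega})\sum_{i=1}^{n-1}\kappa_{i}/(1+b_{\Omega}\kappa_{i})$. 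For each $i$ the function $t\mapsto(\rho'+t)\kappa_{i}/(1+t\kappa_{i})$, $t\in(-\rho',0)$, has derivative $\kappa_{i}(1-\rho'\kappa_{i})(1+t\kappa_{i})^{-2}$ of constant sign (since $\rho'\kappa_{i}\leq\rho'\|\kappa_{i}\|_{L^{\infty}}\leq\rho'/\rho<1$) and takes the values $0$ at $t=-\rho'$ and $\rho'\kappa_{i}$ at $t=0$, so its modulus is at most $\rho'\|\kappa_{i}\|_{L^{\infty}}\leq\rho'/\rho$; consequently $\|\operatorname{div}X\|_{L^{\infty}(\Omega)}\leq 1+(n-1)\rho'/\rho<n$.

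Next I would apply the Gauss--Green formula to $f^{2}X\in W^{1,1}(\Omega)^{n}$ --- legitimate since $\Omega$ is a bounded Lipschitz domain, $f^{2}\in W^{1,1}(\Omega)$ with $\nabla(f^{2})=2f\nabla f$, and $X$ is Lipschitz --- which gives $\rho'\|f\|_{L^{2}(\partial\Omega)}^{2}=\int_{\partial\Omega}f^{2}(X\cdot\mathcal{N})\,d\mathcal{H}^{n-1}=\int_{\Omega}\bigl(f^{2}\operatorname{div}X+2f\,\nabla f\cdot X\bigr)\,dx\leq\bigl(1+(n-1)\rho'/\rho\bigr)\|f\|_{L^{2}(\Omega)}^{2}+2\rho'\|f\|_{L^{2}(\Omega)}\|\nabla f\|_{L^{2}(\Omega)}$. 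Estimating $2\rho'\|f\|_{L^{2}(\Omega)}\|\nabla f\|_{L^{2}(\Omega)}\leq\tfrac{1}{\epsilon}\|f\|_{L^{2}(\Omega)}^{2}+\epsilon\rho'^{2}\|\nabla f\|_{L^{2}(\Omega)}^{2}$ by Young's inequality, dividing by $\rho'$, and letting $\rho'\uparrow\rho$ produces the scalar inequality and hence (\ref{S4E1}).

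The heart of the argument is the second step: one needs a \emph{bounded} Lipschitz field whose divergence is controlled purely by the dimension and whose normal component is constant on $\partial\Omega$. The difficulty is that $D^{2}b_{\Omega}$ genuinely blows up as $\delta_{\partial\Omega}\uparrow\rho$ (it is unbounded already on any ball), so one cannot work on the full collar $\{\delta_{\partial\Omega}<\rho\}$ at once; truncating at $\rho'<\rho$ and inserting the weight $\rho'+b_{\Omega}$, which vanishes exactly on $\{\delta_{\partial\Omega}=\rho'\}$, is what simultaneously makes the zero extension Lipschitz and produces the cancellation keeping $\operatorname{div}X$ bounded. The remaining ingredients --- the collar formula for $\Delta b_{\Omega}$, the monotonicity of $t\mapsto(\rho'+t)\kappa_{i}/(1+t\kappa_{i})$, the Lipschitz gluing across $\{\delta_{\partial\Omega}=\rho'\}$, and the Gauss--Green identity for $W^{1,1}$ vector fields on Lipschitz domains --- are routine.
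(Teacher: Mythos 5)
Your argument is correct and follows the same route as the paper: your vector field $(\rho'+b_{\Omega})\nabla b_{\Omega}$ is, up to the overall scalar $\rho'$, exactly the paper's choice $\psi(t)\mathcal{N}$ with the linear cutoff $\psi(t)=1-t/(\alpha\rho)$ (set $\rho'=\alpha\rho$), and your divergence bound $1+(n-1)\rho'/\rho$ is the paper's $\rho^{-1}(\alpha^{-1}+n-1)$ after dividing through by $\rho'$, with the same Gauss--Green step, Young's inequality, and limit $\rho'\uparrow\rho$ at the end. The only differences are presentational: you phrase the computation directly through $b_{\Omega}$ and the formula for $\Delta b_{\Omega}$ rather than through the collar parametrization $\Psi$ and its metric determinant (with the opposite sign convention for the $\kappa_{i}$, which is harmless), and your monotonicity lemma for $t\mapsto(\rho'+t)\kappa_{i}/(1+t\kappa_{i})$ plays the role of the paper's check that $\psi(t)/(1-t/\rho)\leq 1$.
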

We observe that by selecting $B=(f,0,\dots,0)$ for a fixed scalar function $f\in H^1(\Omega)$, the statement of \Cref{S4L1} remains valid if we replace $B$ by a scalar function. We further note that (\ref{S4E1}) is homogeneous, i.e. both sides show the same scaling behaviour under the transformation $\Omega\mapsto \lambda \Omega$.
\begin{proof}[Proof of \Cref{S4L1}]
	We start by proving the scalar case. So let $f\in H^1(\Omega)$ be any fixed function. We observe that if $X\in C^{0,1}(\overline{\Omega},\mathbb{R}^n)$ is any $C^{0,1}$-extension of the outward unit normal $\mathcal{N}$ to $\overline{\Omega}$ we have the identity
	\begin{gather}
		\label{S4E2}
		\|f\|^2_{L^2(\partial\Omega)}=\int_{\partial\Omega}f^2d\sigma=\int_{\partial\Omega}f^2\mathcal{N}^2d\sigma=\int_{\partial\Omega}f^2X\cdot \mathcal{N}d\sigma=\int_{\Omega}2f\nabla f\cdot Xd^nx+\int_{\Omega}f^2\operatorname{div}(X)d^nx
	\end{gather}
	by means of Gauss' theorem. The goal now is to construct an appropriate extension $X$ of $\mathcal{N}$ which captures the geometry of $\partial\Omega$. To this end we consider a normal collar neighbourhood $U$ of $\partial\Omega$ of width $\rho(\partial\Omega)$. To be more precise it follows from \Cref{S2L2} that the map $\Psi:(-\rho,\rho)\times \partial\Omega\text{, }(t,x)\mapsto x-t\mathcal{N}(x)$ defines a bi-Lipschitz homeomorphism onto some open neighbourhood $V$ of $\partial\Omega$ in $\mathbb{R}^n$ and we then let $U:=V\cap \overline{\Omega}=\Psi([0,\rho))$ be our normal collar neighbourhood. We then fix any $1$-dimensional $C^{0,1}$-bump function $\psi$ with the following properties
	\begin{gather}
		\label{S4E3}
		0\leq \psi \leq 1\text{, }\operatorname{supp}(\psi)\subset (-\rho,\rho)\text{, }\psi(0)=1.
	\end{gather}
	We then define
	\begin{gather}
		\label{S4E4}
		X(y):=\begin{cases}
			\psi(t)\mathcal{N}(x) & \text{ if }y=\Psi(t,x)\in U \\
			0 & \text{ if }y\in \Omega\setminus U
		\end{cases}
	\end{gather}
	which is of class $C^{0,1}(\overline{\Omega},\mathbb{R}^n)$. By choice of $\psi$ we find $|X|\leq 1$ everywhere and so we obtain from (\ref{S4E2})
	\begin{gather}
		\label{S4E5}
		\|f\|^2_{L^2(\partial\Omega)}\leq 2\|f\|_{L^2(\Omega)}\|\nabla f\|_{L^2(\Omega)}+\|f\|^2_{L^2(\Omega)}\|\operatorname{div}(X)\|_{L^{\infty}(\Omega)}.
	\end{gather}
	We are therefore left with estimating $|\operatorname{div}(X)|$ and may focus on points $y\in U$. Now, if $\mu:\mathbb{R}^{n-1}\supset W\rightarrow \partial\Omega$, $\tilde{x}:=(x_1,\dots,x_{n-1})\mapsto \mu(\tilde{x})$ is any fixed chart, then this induces a chart on $U$ by composition with $\Psi$. Since $\Psi(t,x)=x-t\mathcal{N}(x)$ we obtain from this (by identifying the tangent basis vectors with elements of $\mathbb{R}^n$)
	\begin{gather}
		\nonumber
		\partial_t(t,x)=\mathcal{N}(x)\text{, }\partial_{x_j}(t,x)=\partial_{x_j}(x)-t\partial_{x_j}(\mathcal{N}(\mu(\tilde{x})))
	\end{gather}
	where $\partial_{x_j}(x)$ denotes the basis vector of $T_x\partial\Omega$ at $x\in \partial\Omega$ induced by the chart $\mu$ and $\partial_t(t,x),\partial_{x_j}(t,x)$ denote the basis vectors at the point $\Psi(t,x)\in U$ induced by the chart obtained from $\Psi$ by composition with $\mu$. We observe that $\mathcal{N}^2(x)=1$ along $\partial\Omega$, so that by taking the derivative we find $0=\mathcal{N}(\mu(\tilde{x}))\cdot \partial_{x_j}(\mathcal{N}(\mu(\tilde{x})))$. This implies $\partial_t(t,x)\cdot \partial_{x_j}(t,x)=0$ for all $(t,x)$ and all $j$. We notice that $X(y)=\psi(t)\partial_t$ so that in this coordinate system we get
	\begin{gather}
		\label{S4E6}
		\operatorname{div}(X)=\frac{\partial_t(\psi(t)\sqrt{\det(g)})}{\sqrt{\det{g}}}=\psi^{\prime}(t)+\psi(t)\frac{\partial_t\det(g)}{2\det(g)}.
	\end{gather}
	For a fixed $x\in \partial\Omega$ we can then let $\mu$ be a coordinate chart around $x$ such that the induced tangent vectors $\partial_{x_j}(x)$ form an orthonormal basis of the shape operator $s(x)$ of $\partial\Omega$ at $x$. We then compute
	\begin{gather}
		\nonumber
		\partial_{x_j}(t,x)\cdot \partial_{x_k}(t,x)=\delta_{jk}-t\partial_{x_k}(x)\cdot\partial_{x_j}(\mathcal{N}(\mu(\tilde{x})))-t\partial_{x_j}(x)\cdot\partial_{x_k}(\mathcal{N}(\mu(\tilde{x})))
		\\
		\nonumber
		+t^2(\partial_{x_j}(\mathcal{N}(\mu(\tilde{x})))\cdot \partial_{x_k}(\mathcal{N}(\mu(\tilde{x})))).
	\end{gather}
	Since $0=\partial_{x_j}(x)\cdot \mathcal{N}(x)$ for all $x\in \partial\Omega$, we deduce $0=(\partial_{x_k}\partial_{x_j}\mu(\tilde{x}))\cdot \mathcal{N}(x)+\partial_{x_j}(x)\cdot \partial_{x_k}(\mathcal{N}(\mu(\tilde{x})))$ and thus we get
	\begin{gather}
		\nonumber
		\partial_{x_j}(t,x)\cdot \partial_{x_k}(t,x)=\delta_{jk}+2t\mathcal{N}(x)\cdot (\partial_{x_j}\partial_{x_k}\mu)(\tilde{x})+t^2(\partial_{x_j}(\mathcal{N}(\mu(\tilde{x})))\cdot \partial_{x_k}(\mathcal{N}(\mu(\tilde{x})))).
	\end{gather}
	By choice of our basis we find $\mathcal{N}(x)\cdot (\partial_{x_j}\partial_{x_k}\mu)(\tilde{x})=h(\partial_{x_j}(x),\partial_{x_k}(x))=\kappa_j(x)\delta_{jk}$ where $h$ denotes the scalar fundamental form and $\kappa_j$ denote the principal eigenvalues with respect to $\mathcal{N}$ and where no summation convention is used in the last identity. This yields
	\begin{gather}
		\label{S4E7}
		\partial_{x_j}(t,x)\cdot \partial_{x_k}(t,x)=\delta_{jk}+2t\kappa_j\delta_{jk}+t^2(\partial_{x_j}(\mathcal{N}(\mu(\tilde{x})))\cdot \partial_{x_k}(\mathcal{N}(\mu(\tilde{x}))))
	\end{gather}
	where no summation convention is used. We are left with expressing the remaining term in terms of geometric quantities related to $\partial\Omega$. This can be done by exploiting the fact that $\mathcal{N}\cdot (\partial_{x_j}\mathcal{N})=0$, since $|\mathcal{N}|^2=1$ along $\partial\Omega$, and expanding $\partial_{x_j}\mathcal{N}$ in terms of our orthonormal basis $\{\mathcal{N}(x),\partial_{x_1}(x),\dots,\partial_{x_{n-1}}(x)\}$
	\begin{gather}
		\nonumber
		\partial_{x_j}(\mathcal{N}(\mu(\tilde{x})))=\sum_{i=1}^{n-1}\left((\partial_{x_i}(x))\cdot \partial_{x_j}(\mathcal{N}(\tilde{x}))\right)\partial_{x_i}(x).
	\end{gather}
	We have already seen that $\partial_{x_i}(x)\cdot\partial_{x_j}(\mathcal{N}(\mu(\tilde{x})))=-\mathcal{N}(x)\cdot (\partial_{x_i}\partial_{x_j}\mu)(\tilde{x})=-\kappa_j\delta_{ji}$. Inserting this above we deduce
	\begin{gather}
		\nonumber
		\partial_{x_j}(\mathcal{N}(\mu(\tilde{x})))=-\kappa_j\partial_{x_j}(x)
	\end{gather}
	and consequently (\ref{S4E7}) becomes
	\begin{gather}
		\label{S4E8}
		\partial_{x_j}(t,x)\cdot \partial_{x_k}(t,x)=\delta_{jk}(1+2t\kappa_j+t^2\kappa^2_j)=\delta_{jk}(1+t\kappa_j)^2.
	\end{gather}
	Consequently we find $\operatorname{det}(g)(t,x)=\Pi_{j=1}^{n-1}(1+t\kappa_j(x))^2$. By choice of our coordinate system this formula is valid at the fixed point $x\in \partial\Omega$ and for all $t$. We hence may insert this in (\ref{S4E6}) and deduce
	\begin{gather}
		\nonumber
		\operatorname{div}(X)(y)=\psi^{\prime}(t)+\psi(t)\sum_{j=1}^{n-1}\frac{\kappa_j(x)}{1+t\kappa_j(x)}.
	\end{gather}
	Since the principal curvatures may be bounded above by $\rho^{-1}(\partial\Omega)$, cf. \cite[Remark 2.8]{Dal18}, and we consider $0\leq t<\rho$, we find $\frac{1}{1+t\kappa_j}\leq \frac{1}{1-\frac{t}{\rho}}$ and find
	\begin{gather}
		\label{S4E9}
		|\operatorname{div}(X)(y)|\leq |\psi^{\prime}(t)|+\frac{|\psi(t)|}{\rho(\partial\Omega)}\frac{n-1}{1-\frac{t}{\rho(\partial\Omega)}}.
	\end{gather}
To obtain explicit constants in (\ref{S4E9}) we make a specific choice for $\psi$. Fix $0<\alpha<1$ and set
\begin{gather}
	\label{S4E10}
	\psi(t):=\begin{cases}
		1-\frac{t}{\alpha \rho}& \text{, }0\leq t\leq \alpha\rho \\
		0 & \text{ for }t>\alpha\rho
	\end{cases}
\end{gather}
One readily checks that $\psi\in C^{0,1}([0,\infty))$, $0\leq \psi\leq 1$, $\psi(0)=1$ and $\psi(t)=0$ for $|t|\geq \alpha \rho$. One can therefore find a suitable $C^{0,1}$-extension of $\psi$ for $t<0$ to ensure it satisfies (\ref{S4E3}). With this specific choice of $\psi(t)$ we obtain for $0\leq t\leq \alpha \rho$, $\frac{\psi(t)}{1-\frac{t}{\rho(\partial\Omega)}}= \frac{1-\frac{t}{\alpha \rho}}{1-\frac{t}{\rho}}$. We can set $s:=\frac{t}{\rho}$ and $\lambda(s):=\frac{1-\frac{s}{\alpha}}{1-s}$ and compute $\lambda^\prime(s)=-\frac{1-\alpha}{\alpha(1-s)^2}\leq 0$ for all $0<\alpha<1$ so that $\lambda$ is decreasing with $\lambda(0)=1$ and $\lambda(\alpha)=0$ from which conclude that $0\leq \frac{\psi(t)}{1-\frac{t}{\rho(\partial\Omega)}}=\lambda(s)\leq 1$ for all $0<\alpha<1$. In addition, $\|\psi^\prime\|_{L^{\infty}}\leq \frac{1}{\alpha\rho}$ so that we obtain the estimate
\begin{gather}
	\nonumber
	|\operatorname{div}(X)(y)|\leq \frac{1}{\rho(\partial\Omega)}\left(\frac{1}{\alpha}+n-1\right)
\end{gather}
where $0<\alpha<1$ can be arbitrarily chosen. Inserting this in (\ref{S4E5}) we find
\begin{gather}
	\label{S4E13}
	\|f\|^2_{L^2(\partial\Omega)}\leq 2\|f\|_{L^2(\Omega)}\|\nabla f\|_{L^2(\Omega)}+\frac{n-1+\frac{1}{\alpha}}{\rho(\partial\Omega)}\|f\|^2_{L^2(\Omega)}.
\end{gather}
Since this construction works for every $0<\alpha<1$ we can now take the limit $\alpha\nearrow 1$ and find
\begin{gather}
	\label{S4EExtra1}
	\|f\|^2_{L^2(\partial\Omega)}\leq 2\|f\|_{L^2(\Omega)}\|\nabla f\|_{L^2(\Omega)}+\frac{n}{\rho(\partial\Omega)}\|f\|^2_{L^2(\Omega)}.
\end{gather}
Lastly we find $2\|f\|_{L^2(\Omega)}\|\nabla f\|_{L^2(\Omega)}=2\frac{\|f\|_{L^2(\Omega)}}{\sqrt{\rho}}\left(\sqrt{\rho}\|\nabla f\|_{L^2(\Omega)}\right)\leq \epsilon\rho\|\nabla f\|^2_{L^2(\Omega)}+\frac{\|f\|^2_{L^2(\Omega)}}{\epsilon\rho}$ which in combination with (\ref{S4EExtra1}) proves the lemma in case $B=(f,0,\dots,0)$. The general case follows by applying the inequality to each component and taking the sum over all these inequalities.
\end{proof}
\subsection{Proof of the quantitative Gaffney inequality}
We start with the following lemma, which can be found in a more abstract setting in \cite[Theorem 2.1.5]{S95}, see also \cite[Lemma 2.11]{ABDG98} for the $C^{1,1}$-setting for $3$-dimensional domains as well as \cite[Theorem 3.1.1.1 \& Theorem 3.1.1.2]{Gris85} for related results. The calculations simplify in the Euclidean vector field setting, so that we include the simplified proof for convenience of the reader. 
\begin{lem}
	\label{S4L2}
	Let $\Omega\subset\mathbb{R}^n$ be a bounded domain with $C^{1,1}$-boundary. Let $s$ and $H$ denote the shape operator and the mean curvature of $\partial\Omega$ with respect to the outward unit normal respectively. Then the following holds for every $B\in \left(H^1(\Omega)\right)^n$
	\begin{gather}
		\|\nabla B\|^2_{L^2(\Omega)}=\|\operatorname{curl}(B)\|^2_{L^2(\Omega)}+\|\operatorname{div}(B)\|^2_{L^2(\Omega)}+\int_{\partial\Omega}s(B)\cdot Bd\sigma\text{, if }B\parallel \partial\Omega,
		\\
		\|\nabla B\|^2_{L^2(\Omega)}=\|\operatorname{curl}(B)\|^2_{L^2(\Omega)}+\|\operatorname{div}(B)\|^2_{L^2(\Omega)}+(n-1)\int_{\partial\Omega}H|B|^2d\sigma\text{, if }B\perp \partial\Omega.
	\end{gather}
\end{lem}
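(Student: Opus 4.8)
The plan is to reduce by density to $B\in C^\infty(\overline{\Omega},\mathbb{R}^n)$ — once the two identities are written with their boundary terms in the final form $\int_{\partial\Omega}s(B)\cdot B\,d\sigma$ resp. $(n-1)\int_{\partial\Omega}H|B|^2\,d\sigma$, both sides are continuous in the $H^1(\Omega)$-norm thanks to the continuity of the trace operator $\operatorname{tr}\colon H^1(\Omega)\to L^2(\partial\Omega)$ and to $\kappa_i,H\in L^\infty(\partial\Omega)$ — and then to establish one boundary identity valid for arbitrary smooth $B$, from which the two cases follow by inserting the respective boundary condition.

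First I would record the purely algebraic rearrangement: expanding the squares and using $\sum_{i,j}(\partial_jB^i)^2=\sum_{i,j}(\partial_iB^j)^2$ gives $\|\operatorname{curl}(B)\|^2_{L^2(\Omega)}+\|\operatorname{div}(B)\|^2_{L^2(\Omega)}=\|\nabla B\|^2_{L^2(\Omega)}-\sum_{i,j}\int_\Omega(\partial_jB^i)(\partial_iB^j)\,d^nx+\sum_{i,j}\int_\Omega(\partial_iB^i)(\partial_jB^j)\,d^nx$, so that the quantity to be analysed equals $\sum_{i,j}\int_\Omega\big[(\partial_jB^i)(\partial_iB^j)-(\partial_iB^i)(\partial_jB^j)\big]\,d^nx$. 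Next I would integrate by parts in $x_j$ in the first sum and in $x_i$ in the second; the two volume integrals thereby produced, $-\sum_{i,j}\int_\Omega B^i\,\partial_j\partial_iB^j$ and $-\sum_{i,j}\int_\Omega B^i\,\partial_i\partial_jB^j$, coincide by symmetry of the mixed partials and cancel in the difference, leaving only the boundary contributions:
\begin{gather}
\nonumber
\|\nabla B\|^2_{L^2(\Omega)}-\|\operatorname{curl}(B)\|^2_{L^2(\Omega)}-\|\operatorname{div}(B)\|^2_{L^2(\Omega)}=\int_{\partial\Omega}\Big[(\nabla_BB)\cdot\mathcal{N}-(B\cdot\mathcal{N})\operatorname{div}(B)\Big]\,d\sigma,
\end{gather}
where $\mathcal{N}$ is the outward unit normal and $\nabla_BB=(B\cdot\nabla)B$.

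It then remains to evaluate the boundary integrand under each boundary condition. If $B\parallel\partial\Omega$, the second term drops out and, differentiating the constraint $B\cdot\mathcal{N}=0$ along the tangential direction $B$, one gets $(\nabla_BB)\cdot\mathcal{N}=-B\cdot\nabla_B\mathcal{N}$; combined with the Weingarten identity $\nabla_v\mathcal{N}=-s(v)$ for $v\in T\partial\Omega$ (i.e. $\partial_{x_j}\mathcal{N}=-\kappa_j\partial_{x_j}$ in a principal frame, exactly as computed in the proof of \Cref{S4L1}) this equals $s(B)\cdot B$. If $B\perp\partial\Omega$, I would set $\beta:=B\cdot\mathcal{N}$, so that $B=\beta\mathcal{N}$ holds pointwise on $\partial\Omega$; consequently, when acting on functions and evaluated on $\partial\Omega$, the operator $\nabla_B$ equals $\beta\,\partial_\nu$, while in an orthonormal frame $\{\tau_1,\dots,\tau_{n-1},\mathcal{N}\}$ adapted to $\partial\Omega$ one has $\operatorname{div}(B)=\sum_a\tau_a\cdot\nabla_{\tau_a}B+\mathcal{N}\cdot\partial_\nu B$. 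Substituting, the $\partial_\nu B$-contributions cancel and the integrand collapses to $-\beta\sum_a\tau_a\cdot\nabla_{\tau_a}(\beta\mathcal{N})$, a purely tangential quantity; using $\tau_a\cdot\mathcal{N}=0$ and $\tau_a\cdot\nabla_{\tau_a}\mathcal{N}=-\kappa_a$ this equals $\beta^2\sum_a\kappa_a=(n-1)H\beta^2=(n-1)H|B|^2$, and the lemma follows.

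The main obstacle is the normal case. The subtle point is that although $B\neq\beta\mathcal{N}$ in the interior, the pointwise equality $B=\beta\mathcal{N}$ on $\partial\Omega$ is precisely what licenses replacing $\nabla_B$ by $\beta\,\partial_\nu$ inside the boundary integral and what makes the (otherwise uncontrolled) normal-derivative terms of $B$ cancel against one another, so that the surviving expression depends only on the boundary trace of $B$ and the second fundamental form of $\partial\Omega$. Keeping careful track of the sign conventions for $\kappa_j$, $s$ and $H$ (as fixed in \Cref{S21} and in the proof of \Cref{S4L1}) is where most of the care is needed; everything else is routine integration by parts.
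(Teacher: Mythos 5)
Your proof is correct and follows essentially the same route as the paper: the same algebraic expansion of $\|\operatorname{curl} B\|^2$ and $\|\operatorname{div} B\|^2$, the same integration by parts producing the boundary integrand $(\nabla_BB)\cdot\mathcal{N}-(B\cdot\mathcal{N})\operatorname{div}(B)$, and the same Weingarten-map identification in both boundary cases. The only cosmetic difference is in the normal case, where the paper decomposes $B$ via an ambient extension $\widetilde{\mathcal{N}}$ of the normal and uses $\operatorname{div}(\widetilde{\mathcal{N}})=-(n-1)H$, while you split $\operatorname{div}(B)$ directly in an adapted orthonormal frame on $\partial\Omega$ and observe the normal-derivative terms cancel; these are the same computation organised differently.
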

\begin{proof}[Proof of \Cref{S4L2}]
	We recall that $(\operatorname{curl}(B))_{ij}=\frac{\partial_iB^j-\partial_jB^i}{\sqrt{2}}$ so that we can compute for any $B\in \left(H^2(\Omega)\right)^n$
	\begin{gather}
		\nonumber
		\|\operatorname{curl}(B)\|^2_{L^2(\Omega)}=\sum_{i,j=1}^n\int_{\Omega}\frac{(\partial_iB^j-\partial_jB^i)^2}{2}dx^n=\sum_{i,j=1}^n\int_{\Omega}\frac{(\partial_iB^j)^2+(\partial_jB^i)^2-2(\partial_jB^i)(\partial_iB^j)}{2}dx^n
		\\
		\nonumber
		=\|\nabla B\|^2_{L^2(\Omega)}-\sum_{i,j=1}^n\int_{\Omega}(\partial_jB^i)(\partial_iB^j)dx^n=\|\nabla B\|^2_{L^2(\Omega)}+\int_{\Omega}B\cdot \nabla \operatorname{div}(B)dx^n-\int_{\partial\Omega}\mathcal{N}\cdot \nabla_BBd\sigma
		\\
		\label{S4E14}
		=\|\nabla B\|^2_{L^2(\Omega)}-\|\operatorname{div}(B)\|^2_{L^2(\Omega)}+\int_{\partial\Omega}\operatorname{div}(B)(\mathcal{N}\cdot B)-\mathcal{N}\cdot \nabla_BBd\sigma
	\end{gather}
	where $\nabla_BB=\sum_{i,j=1}^nB^i(\partial_iB^j)e_j$ is the standard covariant derivative of $B$ with respect to $B$.
	
	If $B\parallel \partial\Omega$, then $\mathcal{N}\cdot B=0$ and $\mathcal{N}\cdot \nabla_BB=s(B)\cdot B$ where $s$ denotes the shape operator of $\partial\Omega$ w.r.t. $\mathcal{N}$. Hence in this case (\ref{S4E14}) becomes
	\begin{gather}
		\label{S4E15}
		\|\nabla B\|^2_{L^2(\Omega)}=\|\operatorname{curl}(B)\|^2_{L^2(\Omega)}+\|\operatorname{div}(B)\|^2_{L^2(\Omega)}+\int_{\partial\Omega}s(B)\cdot Bd\sigma.
	\end{gather}
	If on the other hand $B\perp \partial\Omega$, then we can let $\widetilde{\mathcal{N}}$ be any $C^{0,1}$-extension of $\mathcal{N}$ to $\Omega$ such that $|\widetilde{\mathcal{N}}|=1$ in some open neighbourhood of $\partial\Omega$ and we can expand $B=(B\cdot \widetilde{\mathcal{N}})\widetilde{\mathcal{N}}+B^\perp$ where $B^\perp\cdot \widetilde{\mathcal{N}}=0$ in this neighbourhood. A direct computation yields
	\begin{gather}
		\nonumber
		(\mathcal{N}\cdot B)\operatorname{div}(B)-\mathcal{N}\cdot \nabla_BB=(\mathcal{N}\cdot B)^2\operatorname{div}(\widetilde{\mathcal{N}})-(\mathcal{N}\cdot B)^2\widetilde{\mathcal{N}}\cdot \nabla_{\widetilde{\mathcal{N}}}\widetilde{\mathcal{N}}+(\mathcal{N}\cdot B)\left(\operatorname{div}(B^\perp)-\mathcal{N}\cdot \nabla_{\mathcal{N}}B^\perp\right).
	\end{gather}
	We notice that $\operatorname{div}(B^\perp)-\mathcal{N}\cdot \nabla_{\mathcal{N}}B^\perp=\operatorname{div}_{\partial\Omega}(B^{\perp}|_{\partial\Omega})=0$ since $B^\perp|_{\partial\Omega}=0$ and where $\operatorname{div}_{\partial\Omega}$ denotes the intrinsic divergence on $\partial\Omega$. In addition we have the identity $\widetilde{\mathcal{N}}\cdot \nabla_{\widetilde{\mathcal{N}}}\widetilde{\mathcal{N}}=\widetilde{\mathcal{N}}\cdot \nabla \frac{|\widetilde{\mathcal{N}}|^2}{2}=0$ where the last identity follows from the fact that $|\widetilde{\mathcal{N}}|=1$ in a neighbourhood of $\partial\Omega$. We conclude $(\mathcal{N}\cdot B)\operatorname{div}(B)-\mathcal{N}\cdot \nabla_BB=(\mathcal{N}\cdot B)^2\operatorname{div}(\widetilde{\mathcal{N}})=|B|^2\operatorname{div}(\widetilde{\mathcal{N}})$ where we used that $B\perp\partial\Omega$ and where $\widetilde{\mathcal{N}}$ is any $C^{0,1}$-extension of $\mathcal{N}$ which is of unit length in a neighbourhood of $\partial\Omega$. It is then standard, cf. \cite[Problem 8.2 (b)]{L18}, that $\operatorname{div}(\widetilde{\mathcal{N}})=-(n-1)H$ where $H$ is the mean curvature with respect to $\mathcal{N}$. This, in combination with (\ref{S4E14}), implies
	\begin{gather}
		\nonumber
		\|\nabla B\|^2_{L^2(\Omega)}=\|\operatorname{curl}(B)\|^2_{L^2(\Omega)}+\|\operatorname{div}(B)\|^2_{L^2(\Omega)}+(n-1)\int_{\partial\Omega}|B|^2 Hd\sigma.
	\end{gather}
	For $B\in \left(H^1(\Omega)\right)^n$, the statement then follows by approximation.
\end{proof}
\begin{proof}[Proof of \Cref{S3T1}]
	We start from \Cref{S4L2} and observe that we have the following pointwise estimates
	\begin{gather}
		\label{S4E18}
		|s(B)\cdot B|\leq \max_{1\leq i\leq n-1}\sup_{x\in \partial\Omega}|\kappa_i(x)||B|^2\leq \frac{|B|^2}{\rho(\partial\Omega)}\text{ if }B\parallel \partial\Omega,
		\\
		\label{S4E19}
		|(n-1)H|\leq (n-1)\max_{1\leq i\leq n-1}\sup_{x\in \partial\Omega}|\kappa_i(x)|\leq \frac{n-1}{\rho(\partial\Omega)}
	\end{gather}
	where we used that $\rho^{-1}(\partial\Omega)$ provides an upper bound on the curvatures of $\partial\Omega$. Now let $B\in \left(H^1(\Omega)\right)^n$ with either $B\parallel \partial\Omega$ or $B\perp \partial\Omega$ and set $c_n:=\begin{cases}
		1 & \text{ if }B\parallel \partial\Omega, \\
		n-1 & \text{ if }B\perp \partial\Omega,
	\end{cases}$. Then \Cref{S4L2} implies
	\begin{gather}
		\label{S4E20}
		\|\nabla B\|^2_{L^2(\Omega)}\leq \|\operatorname{curl}(B)\|^2_{L^2(\Omega)}+\|\operatorname{div}(B)\|^2_{L^2(\Omega)}+\frac{c_n}{\rho(\partial\Omega)}\|B\|^2_{L^2(\partial\Omega)}.
			\end{gather}
			We can then combine (\ref{S4E20}) with \Cref{S4L1} to deduce
			\begin{gather}
				\nonumber
				\|\nabla B\|^2_{L^2(\Omega)}\leq \|\operatorname{curl}(B)\|^2_{L^2(\Omega)}+\|\operatorname{div}(B)\|^2_{L^2(\Omega)}+c_n\epsilon \|\nabla B\|^2_{L^2(\Omega)}+c_n\frac{n+\frac{1}{\epsilon}}{\rho^2(\partial\Omega)}\|B\|^2_{L^2(\Omega)}\text{ for all }\epsilon>0.
			\end{gather}
			We rearrange terms and find
			\begin{gather}
				\label{S4E21}
				\|\nabla B\|^2_{L^2(\Omega)}\leq \frac{\|\operatorname{curl}(B)\|^2_{L^2(\Omega)}+\|\operatorname{div}(B)\|^2_{L^2(\Omega)}}{1-\epsilon c_n}+\frac{c_n}{1-c_n \epsilon}\frac{n+\frac{1}{\epsilon}}{\rho^2(\partial\Omega)}\|B\|^2_{L^2(\Omega)}\text{ for all }0<\epsilon<\frac{1}{c_n}.
			\end{gather}
			We now pick $\epsilon=\frac{1}{c_n+\sqrt{c_n}\sqrt{c_n+n}}$ and observe that $\epsilon<\frac{1}{c_n}$ as required. We verify by direct computation $\frac{c_n}{1-c_n\epsilon}\left(n+\frac{1}{\epsilon}\right)=(c_n+\sqrt{c_n}\sqrt{c_n+n})^2$ and $\frac{1}{1-\epsilon c_n}\leq \frac{c_n}{1-\epsilon c_n}\left(n+\frac{1}{\epsilon}\right)$ for our choice of $\epsilon$. From this we deduce
				\begin{gather}
					\label{S4E22}
					\|\nabla B\|^2_{L^2(\Omega)}\leq \left(c_n+\sqrt{c_n}\sqrt{c_n+n}\right)^2\left(\|\operatorname{curl}(B)\|^2_{L^2(\Omega)}+\|\operatorname{div}(B)\|^2_{L^2(\Omega)}+\frac{\|B\|^2_{L^2(\Omega)}}{\rho^2(\partial\Omega)}\right).
				\end{gather}
				The theorem now follows immediately from (\ref{S4E22}).
\end{proof}

\subsection{Proof of the quantitative Korn inequality}
The following is the corresponding analogue of \Cref{S4L2} for the situation of the symmetric gradient
\begin{lem}
	\label{S4L3}
	Let $\Omega\subset\mathbb{R}^n$ be a bounded domain with $C^{1,1}$-boundary. Let $s$ and $H$ denote the shape operator and mean curvature of $\partial\Omega$ with respect to the outward unit normal respectively. Then the following holds for every $B\in \left(H^1(\Omega)\right)^n$
	\begin{gather}
		\label{S4E23}
		\|\nabla B\|^2_{L^2(\Omega)}=2\|\operatorname{Sym}(\nabla B)\|^2_{L^2(\Omega)}-\|\operatorname{div}(B)\|^2_{L^2(\Omega)}-\int_{\partial\Omega}s(B)\cdot Bd\sigma\text{, if }B\parallel \partial\Omega,
		\\
		\label{S4E24}
		\|\nabla B\|^2_{L^2(\Omega)}=2\|\operatorname{Sym}(\nabla B)\|^2_{L^2(\Omega)}-\|\operatorname{div}(B)\|^2_{L^2(\Omega)}-(n-1)\int_{\partial\Omega}H|B|^2d\sigma\text{, if }B\perp\partial\Omega.
	\end{gather}
\end{lem}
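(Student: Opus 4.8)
The plan is to reduce the statement entirely to \Cref{S4L2} by means of an elementary pointwise algebraic identity relating $\operatorname{curl}(B)$, $\operatorname{Sym}(\nabla B)$ and $\nabla B$, so that no new integration by parts is needed. First I would record the identity
\begin{gather}
	\nonumber
	|\operatorname{curl}(B)|^2 = 2\left(|\nabla B|^2 - |\operatorname{Sym}(\nabla B)|^2\right),
\end{gather}
valid pointwise almost everywhere for every $B\in \left(H^1(\Omega)\right)^n$. This follows by decomposing $\nabla B$ into its symmetric and anti-symmetric parts: writing $\operatorname{Skew}(\nabla B):=\nabla B-\operatorname{Sym}(\nabla B)$ one has $(\operatorname{Skew}(\nabla B))_{ij}=\frac{\partial_jB^i-\partial_iB^j}{2}$, and since the symmetric and anti-symmetric parts are orthogonal with respect to the Frobenius inner product one gets $|\nabla B|^2=|\operatorname{Sym}(\nabla B)|^2+|\operatorname{Skew}(\nabla B)|^2$. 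By the definition $(\operatorname{curl}(B))_{ij}=\frac{\partial_jB^i-\partial_iB^j}{\sqrt 2}=\sqrt 2\,(\operatorname{Skew}(\nabla B))_{ij}$ we obtain $|\operatorname{curl}(B)|^2=2|\operatorname{Skew}(\nabla B)|^2=2(|\nabla B|^2-|\operatorname{Sym}(\nabla B)|^2)$. Integrating over $\Omega$ then yields
\begin{gather}
	\nonumber
	\|\operatorname{curl}(B)\|^2_{L^2(\Omega)}=2\|\nabla B\|^2_{L^2(\Omega)}-2\|\operatorname{Sym}(\nabla B)\|^2_{L^2(\Omega)}.
\end{gather}

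Next I would simply substitute this expression for $\|\operatorname{curl}(B)\|^2_{L^2(\Omega)}$ into the two identities provided by \Cref{S4L2}. In the tangent case, $\|\nabla B\|^2_{L^2(\Omega)}=\|\operatorname{curl}(B)\|^2_{L^2(\Omega)}+\|\operatorname{div}(B)\|^2_{L^2(\Omega)}+\int_{\partial\Omega}s(B)\cdot B\,d\sigma$ becomes $\|\nabla B\|^2_{L^2(\Omega)}=2\|\nabla B\|^2_{L^2(\Omega)}-2\|\operatorname{Sym}(\nabla B)\|^2_{L^2(\Omega)}+\|\operatorname{div}(B)\|^2_{L^2(\Omega)}+\int_{\partial\Omega}s(B)\cdot B\,d\sigma$, and collecting the two copies of $\|\nabla B\|^2_{L^2(\Omega)}$ gives exactly (\ref{S4E23}). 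The normal case is handled identically, with $\int_{\partial\Omega}s(B)\cdot B\,d\sigma$ replaced by $(n-1)\int_{\partial\Omega}H|B|^2\,d\sigma$, producing (\ref{S4E24}).

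I do not expect a genuine obstacle here: the entire analytic content — Gauss' theorem, the identification of the boundary terms with the shape operator and the mean curvature, and the passage from $H^2$ to $H^1$ by approximation — is already contained in \Cref{S4L2}, while the remaining ingredient is the purely algebraic identity above, which requires no regularity beyond $B\in\left(H^1(\Omega)\right)^n$ since both sides are quadratic in the first-order derivatives. One could alternatively repeat the integration-by-parts argument of \Cref{S4L2} verbatim, replacing the expansion of $|\operatorname{curl}(B)|^2$ by that of $|\operatorname{Sym}(\nabla B)|^2$; but reducing to the already established \Cref{S4L2} is shorter and avoids reproducing the boundary computations.
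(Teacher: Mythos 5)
Your proof is correct. The pointwise identity $|\operatorname{curl}(B)|^2=2\bigl(|\nabla B|^2-|\operatorname{Sym}(\nabla B)|^2\bigr)$ follows exactly as you argue from the Frobenius-orthogonal decomposition of $\nabla B$ into symmetric and skew parts and from the normalisation $(\operatorname{curl}(B))_{ij}=\sqrt{2}\,(\operatorname{Skew}(\nabla B))_{ij}$ used in this paper, and substituting the integrated version into both cases of \Cref{S4L2} and collecting the $\|\nabla B\|^2_{L^2(\Omega)}$ terms produces (\ref{S4E23}) and (\ref{S4E24}) verbatim. Since the algebraic identity is purely pointwise and quadratic in first derivatives, invoking it for $B\in\left(H^1(\Omega)\right)^n$ is unproblematic.

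The paper's proof is organised slightly differently: it does not use \Cref{S4L2} as a black box, but rather re-runs the integration-by-parts computation of \Cref{S4L2} starting from the expansion $2\|\operatorname{Sym}(\nabla B)\|^2_{L^2(\Omega)}=\|\nabla B\|^2_{L^2(\Omega)}+\sum_{i,j}\int_\Omega(\partial_jB^i)(\partial_iB^j)\,d^nx$, observing that the only change is the sign of the mixed term. The underlying algebraic content is the same (your identity is precisely the statement that the mixed term carries opposite sign in the two expansions), but your reduction is logically tighter: it treats \Cref{S4L2} as established, applies a one-line algebraic substitution, and thereby isolates the fact that no new analysis (no further Gauss' theorem, no new boundary geometry, no new approximation argument) is needed. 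This makes the dependence of \Cref{S4L3} on \Cref{S4L2} completely explicit, whereas the paper's ``mutatis mutandis'' leaves the reader to re-trace the boundary computation. Both approaches are essentially equally short; yours has the advantage of modularity, the paper's the advantage of being self-contained if one wanted to present \Cref{S4L3} without \Cref{S4L2}.
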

\begin{proof}[Proof of \Cref{S4L3}]
	The proof of \Cref{S4L2} applies mutatis mutandis by considering the expansion $2\|\operatorname{Sym}(\nabla B)\|^2_{L^2(\Omega)}=\|\nabla B\|^2_{L^2(\Omega)}+\sum_{i,j=1}^n\int_{\Omega}(\partial_jB^i)(\partial_iB^j)dx^n$. The only difference now is that the mixed term is of a different sign.
\end{proof}
\begin{proof}[Proof of \Cref{S3T4}]
	Fix $B\in \left(H^1(\Omega)\right)^n$ with either $B\parallel \partial\Omega$ or $B\perp \partial\Omega$. Since $\|\operatorname{div}(B)\|^2_{L^2(\Omega)}\geq 0$ we deduce from (\ref{S4E18}),(\ref{S4E19}) and \Cref{S4L3}
	\begin{gather}
		\nonumber
		\|\nabla B\|^2_{L^2(\Omega)}\leq 2\|\operatorname{Sym}(\nabla B)\|^2_{L^2(\Omega)}+\frac{c_n}{\rho(\partial\Omega)}\|B\|^2_{L^2(\partial\Omega)}
	\end{gather}
	where $c_n=1$ if $B\parallel \partial\Omega$ and $c_n=n-1$ if $B\perp\partial\Omega$. Making use of \Cref{S4L1} yields
	\begin{gather}
		\nonumber
		\|\nabla B\|^2_{L^2(\Omega)}\leq 2\|\operatorname{Sym}(\nabla B)\|^2_{L^2(\Omega)}+c_n\epsilon\|\nabla B\|^2_{L^2(\Omega)}+\frac{c_n}{\rho^2(\partial\Omega)}\left(n+\frac{1}{\epsilon}\right)\|B\|^2_{L^2(\Omega)}\text{ for all }\epsilon>0.
	\end{gather}
	Rearranging terms yields
	\begin{gather}
		\label{S4E25}
		\|\nabla B\|^2_{L^2(\Omega)}\leq \frac{2\|\operatorname{Sym}(\nabla B)\|^2_{L^2(\Omega)}}{1-c_n\epsilon}+\frac{c_n}{1-c_n\epsilon}\frac{n+\frac{1}{\epsilon}}{\rho^2(\partial\Omega)}\|B\|^2_{L^2(\Omega)}\text{ for all }0<\epsilon<\frac{1}{c_n}.
	\end{gather}
	We recognise that (\ref{S4E25}) has the same structure as (\ref{S4E21}) except for the additional factor $2$ in the expression $\frac{2}{1-\epsilon c_n}$. We can now make the same choice for $\epsilon$, i.e. consider $\epsilon=\frac{1}{c_n+\sqrt{c_n}\sqrt{c_n+n}}$ and observe that $\frac{2}{1-c_n\epsilon}\leq \frac{c_n}{1-c_n\epsilon}\left(n+\frac{1}{\epsilon}\right)=\left(c_n+\sqrt{c_n}\sqrt{c_n+n}\right)^2$ so that we arrive at
	\begin{gather}
		\label{S4E26}
		\|\nabla B\|^2_{L^2(\Omega)}\leq \left(c_n+\sqrt{c_n}\sqrt{c_n+n}\right)^2\left(\|\operatorname{Sym}(\nabla B)\|^2_{L^2(\Omega)}+\frac{\|B\|^2_{L^2(\Omega)}}{\rho^2(\partial\Omega)}\right)
	\end{gather}
	which proves the theorem.
\end{proof}
\section*{Acknowledgements}
The research was supported in part by the MIUR Excellence Department Project awarded to Dipartimento di Matematica, Università di Genova, CUP D33C23001110001. This work has been partly supported by the Inria AEX StellaCage. 
\bibliographystyle{plain}
\bibliography{mybibfileNOHYPERLINK}
\footnotesize
\end{document}